\theoremstyle{definition}
\newtheorem{thm}{Theorem}[section]
\newtheorem{cor}{Corollary}[section]
\newtheorem{lem}{Lemma}[section]
\newtheorem{prop}{Proposition}[section]
\newtheorem{df}{Definition}[section]
\newtheorem{rk}{Remark}[section]
\newcommand{\myfootnote}[1]{
    \renewcommand{\thefootnote}{}
    \footnotetext{\hspace{-2pt}\scriptsize#1}
    \renewcommand{\thefootnote}{\arabic{footnote}}
}
\newcommand{\field}[1]{\mathbb{#1}}
\newcommand{\N}{\field{N}}
\newcommand{\R}{\field{R}}
\newcommand{\C}{\field{C}}
\newcommand{\B}{\field{B}}
\DeclareMathOperator{\id}{Id}
\DeclareMathOperator{\Aut}{Aut}
\DeclareMathOperator{\supp}{supp}
\begin{document}
\title[]{Families of Proper Holomorphic Embeddings and Carleman-type Theorem with parameters}

\author{Giovanni D. Di Salvo}
\author{Tyson Ritter}
\author{Erlend F. Wold}

%
%
\date{\today}
\keywords{}
\myfootnote
{
    Published in \emph{The Journal of Geometric Analysis},
    January~2023,
    volume~33,
    issue~75.
    }    
\myfootnote
{
	\doi{10.1007/s12220-022-01110-y}.
}
\begin{abstract}
We solve the problem of simultaneously embedding properly holomorphically into $\C^2$ a whole family of $n$--connected domains $\Omega_r\subset\Bbb P^1$ such that none of the components of $\Bbb P^1\setminus\Omega_r$ reduces to a point, by constructing a continuous mapping $\Xi\colon\bigcup_r\{r\}\times\Omega_r\to\C^2$ such that $\Xi(r,\cdot)\colon\Omega_r\hookrightarrow\C^2$ is a proper holomorphic embedding for every $r$. To this aim, a parametric version of both the Anders\'en--Lempert procedure and Carleman's Theorem is formulated and proved.
\end{abstract}

\maketitle

\section{Introduction}

Existence of proper holomorphic embeddings of Riemann surfaces $\mathcal R$ into 2--dimensional complex manifolds $X$, e.g., $X=\mathbb C^2$, 
with prescribed geometrical properties, e.g., being complete, has been an active area of research over the recent years. 
Various techniques have been developed, but in several cases, positive results have been obtained only at the cost 
of perturbing the complex structure of $\mathcal R$ (see \v Cerne--Forstneri\v c \cite{thirdpaper:CF02}, Alarc\'on \cite{thirdpaper:A20} and Alarc\'on--L\'opez \cite{thirdpaper:AL13}). 
It can be hoped, however, that if you let $r$ be a local parameter on the moduli space of Riemann surfaces of a given type, 
and you perform various constructions continuously with the parameter $r$ near a given point $r_0$, then 
you will get a perturbation of the complex structure for each given $r$, but at least one perturbation 
will correspond to your initial $r_0$. Indeed this is the philosophy behind the embedding results of Globevnik--Stens\o nes \cite{thirdpaper:GS95}. 
The purpose of this article is to take a first step towards results of this type that may be generalized to larger classes 
of Riemann surfaces. 

\medskip

We will consider the following. It is known that any $n$--connected domain $\Omega$ in the Riemann sphere may be mapped univalently 
onto a domain in the Riemann sphere whose complement consists of $n$ parallel disjoint 
slits with a given inclination $\Theta$ to the real axis.
The univalent map achieving this is uniquely determined by $\Theta$ and the choice of a certain 
normalization of the Laurent series expansion at a chosen 
point $\zeta\in\Omega$ being sent to $\infty$ (see Goluzin, \cite{thirdpaper:Goluzin}, page 213). Considering a continuous family of $n$--connected domains, we obtain a continuously varying 
family of uniformizing slit--maps. 
\medskip

Let $C_j\subset\C$ be compact disks
and $I_j\subset\R_{>0}$ be compact intervals, $j=1,\dots, n$.
Set $B_j:=C_j\times I_j$ and $B:=B_1\times\cdots\times B_n$.
Let
$r=((a_1,b_1),\dots,(a_n,b_n))$ denote the coordinates on $B$, 
and letting $l_{r,j}$ denote the closed straight line segment 
which is parallel to the real axis with right end--point $a_j(r)$ and of length $b_j(r)$, 
we assume that $L_r:=\{l_{r,1},\dots,l_{r,n}\}$ is a set
of pairwise disjoint slits, and thus $\mathbb P^1\setminus L_r$  is an $n$--connected domain, 
none of whose boundary components are isolated points.
After possibly having to apply the map $z\mapsto(z-a_1(r))/b_1(r)$
we may assume that for all $r$ we have that $l_{r,1}=[-1,0] \subset \C$. \

\medskip

The goal is to prove the following. 
\begin{thm}\label{thirdpaper:thm:main}
In $B\times\mathbb P^1$ set 
$$
\Omega=(B\times\mathbb P^1)\setminus (\bigcup_{r\in B} \{r\}\times L_r).
$$
Then there exists a continuous map $\Xi\colon\Omega\rightarrow\mathbb C^2$ such that 
for each $r\in B$ we have that $\Xi(r,\cdot)\colon\Omega_r\rightarrow\mathbb C^2$ is a 
proper holomorphic embedding. 
\end{thm}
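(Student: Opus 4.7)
The plan is to reduce the theorem to parametric versions of the Andersén--Lempert theorem and of Carleman's approximation theorem, and then to run a parametric version of the Globevnik--Stens{\o}nes scheme \cite{thirdpaper:GS95} for embedding a slit domain into $\mathbb C^2$. Concretely, I intend to construct $\Xi$ as a locally uniform limit of a sequence $\{\Xi_k\}_{k\in\N}$ of continuous maps from $\Omega$ to $\mathbb C^2$, each holomorphic in the second variable, such that on a continuously varying exhaustion $\{K_{r,k}\}_{k}$ of $\Omega_r$ by compact sets the following hold: (i) each restriction $\Xi_k(r,\cdot)\colon K_{r,k}\to\mathbb C^2$ is a holomorphic embedding; (ii) $\Xi_{k+1}(r,\cdot)$ agrees with $\Xi_k(r,\cdot)$ up to a summable error on $K_{r,k}$; and (iii) $\Xi_k(r,\cdot)$ sends $K_{r,k}\setminus K_{r,k-1}$ outside the ball of radius $k$ in $\mathbb C^2$. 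Conditions (i) and (ii) give convergence to a continuous $\Xi$ with each $\Xi(r,\cdot)$ holomorphic and injective; condition (iii) yields properness.

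For the initial embedding $\Xi_0$, I would use a parametric Carleman approximation to produce, continuously in $r$, a holomorphic function $h_r\colon\Omega_r\to\mathbb C$ which blows up along $L_r$, and set $\Xi_0(r,z)=(z,h_r(z))$; this already makes $\Xi_0(r,\cdot)$ proper for each $r$, though it need not yet be injective. The inductive step is the core of the argument: given $\Xi_{k-1}$, I would build a continuous family of isotopies of biholomorphisms on a neighbourhood of the Runge, polynomially convex set $\Xi_{k-1}(r,K_{r,k})\subset\mathbb C^2$, starting at the identity and ending at a map which is uniformly close to the identity on $\Xi_{k-1}(r,K_{r,k-1})$ while pushing $\Xi_{k-1}(r,K_{r,k}\setminus K_{r,k-1})$ outside a large ball. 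A parametric Andersén--Lempert theorem would then approximate this isotopy, uniformly in $r$, by a continuous family $\Psi_{k,r}\in\Aut\mathbb C^2$ of compositions of shears, and one sets $\Xi_k(r,z):=\Psi_{k,r}(\Xi_{k-1}(r,z))$. The shape of the target in the isotopy is guided by the slit structure of $L_r$, which is what makes the push-out geometrically possible in this class of domains.

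The main obstacle is maintaining continuous dependence on $r$ throughout the procedure; this is what forces one to develop both a parametric Andersén--Lempert theorem and a parametric Carleman theorem rather than applying the classical versions fibrewise. For the first, one must revisit the classical proof (splitting an isotopy into small time steps, integrating time-dependent holomorphic vector fields, decomposing them into shear fields, and approximating on Runge sets) and verify that every step can be carried out with continuous dependence on an auxiliary parameter; in particular one needs that the polynomially convex hulls of the images $\Xi_{k-1}(r,K_{r,k})$ behave continuously enough to support a continuous family of Runge neighbourhoods. For the second, one must approximate continuous functions on the moving set $L_r$ by functions holomorphic on $\Omega_r$, continuously in $r$, with errors controlled uniformly on a compact exhaustion; here the regularity of $L_r$ as a family of pairwise disjoint parallel real segments with continuously varying endpoints is what should make the construction tractable, but keeping continuity of the approximation across values of $r$ is the decisive technical point around which the paper is organised.
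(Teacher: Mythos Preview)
Your plan names the right two tools—parametric Andersén–Lempert and parametric Carleman—but assigns them the wrong roles, and this creates a genuine gap. Carleman approximation replaces given data on a closed set by entire functions; it does not manufacture a holomorphic function on $\Omega_r$ that blows up along the moving boundary $L_r$, and you give no mechanism for that step. In the paper the initial embedding is explicit and rational: after $\psi(z)=1/z+1$ one sets $\phi_r(z)=\bigl(z,\sum_{j=2}^n e^{i\theta_j}/\alpha_{r,j}(z)\bigr)$, so that $X_r:=\phi_r\circ\psi(\Omega_r)\subset\C^2$ is an embedded bordered surface whose boundary $S_r$ consists of $n$ \emph{unbounded} real curves. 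The whole point of Section~\ref{thirdpaper:sec:setup} (Propositions~\ref{thirdpaper:tang1}--\ref{thirdpaper:t-tang}) is that, after a fixed $\C$-linear change of coordinates, each projection $\pi_\eta(s_{r,j})$ is a $\theta$-directed Lipschitz graph over a real ray; this geometric normal form for the boundary, not an approximation argument, is what the choice of $\phi_r$ is designed to deliver.

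The push-out is also organised differently from what you describe: one does not move the compact annuli $\Xi_{k-1}(r,K_{r,k}\setminus K_{r,k-1})$ outward, but rather the non-compact boundary $S_r$ itself. Proposition~\ref{thirdpaper:prop} produces, for any polynomially convex $K$ disjoint from $S$, a continuous family $g(r,\cdot)\in\Aut\C^2$ close to the identity on $K_r$ with $g(r,S_r)\subset\C^2\setminus T\overline{\B^2}$; iterating this (with $K$ containing both $j\overline{\B^2}$ and a large compact piece of the current image of $X_r$) and applying the push-out lemma yields a Fatou–Bieberbach domain $D_r\supset X_r$ with $S_r\subset\partial D_r$, whence properness. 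The difficulty in Proposition~\ref{thirdpaper:prop} is that $S_r$ is unbounded: after building an isotopy that slides $S_r$ along itself (Lemma~\ref{thirdpaper:isotopy}) and decomposing via Andersén–Lempert into shears, one must modify each shear so that it fixes the tails of $S_r$ near infinity. \emph{This} is where the parametric Carleman theorem enters—the shear coefficients are approximated by entire functions that nearly vanish on the projected tails $\pi_j(S_r)$, which are precisely the $\theta$-directed families treated in Theorem~\ref{thirdpaper:Carleman}. Your description of the Carleman step as ``approximate continuous functions on the moving set $L_r$ by functions holomorphic on $\Omega_r$'' is neither what is proved nor what is needed.
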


\section{The Setup}\label{thirdpaper:sec:setup}

We will now introduce a setup to prove Theorem \ref{thirdpaper:thm:main}. First, we need the notion of a certain directed family of curves. \

Let $C>0$ and $R>1$. Let $\Gamma$ denote the half line $\Gamma=\{x\in\mathbb R\subset\mathbb C: x\geq R-1\}$,
let $B\subset\R^m$ be a compact set, and denote by $(r,x)$  the coordinates on $B\times\Gamma$.
Let $h, h'=\frac{\partial h}{\partial x}\in \mathscr C(B\times \Gamma)$, and assume 
that 
\begin{equation*}
\left|h(r,x)\right|<\frac C2\;\;,\;\;
\left|h'(r,x)\right|<\frac12.
\end{equation*}

\begin{df}
Let $\theta\in [0,2\pi)$. Then the set of curves 
$$
e^{i\theta}\cdot\{x+ih(r,x)\;:\;r\in B,\; x\in\Gamma\}
$$
is referred to as being \emph{$\theta$--directed}, and subordinate to $R,C$. A family of curves is said to be 
\emph{$\theta$--directed} if it is $\theta$--directed subordinate to $R,C$ for sufficiently large $R,C$. 
\end{df}

With the notation in the previous section, set $\psi(z):=\frac{1}{z}+1$, $\lambda_{r,j}:=\psi(l_{r,j})$, $c_j(r):=\psi(a_j(r))$. Then $\Lambda_r:=\{\lambda_{r,1},\dots,\lambda_{r,n}\}$
is a set of disjoint slits in $\mathbb P^1$, where $\lambda_{r,1}$ is the negative real axis 
and $\lambda_{r,j}$ are circular slits (or possibly straight line segments along the real axis) for $j=2,\dots,n$.  We set $e^{i\theta_{r,j}}:=\psi'(a_j(r))/|\psi'(a_j(r))|$, i.e.,
we have that  $e^{i\theta_{r,j}}$ is a unit tangent to the circle $\Lambda_{r,j}$ on which $\lambda_{r,j}$ lies 
at the point $c_j(r)$. Setting $\alpha_{r,j}(z):= e^{-i\theta_{r,j}}(z - c_j(r))$ we have that 
$\alpha_{r,j}(\Lambda_{r,j})$ is a circle which is tangent to the  
real axis at the origin, 
and we let $\kappa_{r,j}$ denote the signed curvature of this circle; positive if the 
circle is in the upper half plane, negative if the circle is in the lower half plane, and zero if the circle is the real axis. 

\medskip

\begin{prop}\label{thirdpaper:tang1}	
Fix $j \in \{2,\dots,n\}$ and suppose that $g_{r,j}\in\mathcal O(\triangle_\delta(c_j(r)))$ is a continuous family of functions, for $r\in B$. Let $\theta\in [0,2\pi)$, and set 
$$
\varphi_j(r,z) := \frac{e^{i\theta}}{\alpha_{r,j}(z)} + g_{r,j}(z).
$$
Then the family $\Gamma_j$ of curves  $\varphi(r,\lambda_{r,j})$ is $(\theta-\pi)$--directed. 
\end{prop}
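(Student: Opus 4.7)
The argument rests on one elementary geometric fact: if $\mathcal{C}$ is a circle tangent to the real axis at $0$ with signed curvature $\kappa$ (center $i/\kappa$, radius $1/|\kappa|$), then the defining equation $|w|^2 = (2/\kappa)\operatorname{Im}(w)$ immediately gives $\operatorname{Im}(1/w) = -\kappa/2$ for every $w\in\mathcal{C}\setminus\{0\}$. Applied to $\mathcal{C} = \alpha_{r,j}(\Lambda_{r,j})$, this says that $1/\alpha_{r,j}$ maps $\lambda_{r,j}\setminus\{c_j(r)\}$ into the horizontal line $\{\operatorname{Im}(\zeta)=-\kappa_{r,j}/2\}$. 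Writing $\xi := \operatorname{Re}(1/\alpha_{r,j}(z))$, one therefore has
\[
\varphi_j(r,z) \;=\; e^{i\theta}\bigl(\xi - i\kappa_{r,j}/2\bigr) + g_{r,j}(z).
\]

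Next I would fix the orientation. Since the unit tangent to $l_{r,j}$ at its right endpoint $a_j(r)$, pointing into the segment, is $-1$, the definition of $\theta_{r,j}$ forces the analogous inward tangent to $\lambda_{r,j}$ at $c_j(r)$ to be $-e^{i\theta_{r,j}}$. After applying $\alpha_{r,j}$, the slit therefore leaves the origin in the $-1$ direction along its tangent circle, and a direct inspection of the parametrization of that circle shows $\xi\to -\infty$ as $z\to c_j(r)$ from within $\lambda_{r,j}$. Hence $\varphi_j(r,z)$ escapes to infinity in the direction $-e^{i\theta}=e^{i(\theta-\pi)}$, matching the $(\theta-\pi)$-directed form.

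To realize this matching explicitly, I would set
\[
x \;:=\; -\xi - \operatorname{Re}\bigl(e^{-i\theta}g_{r,j}(z)\bigr), \qquad h(r,x) \;:=\; \kappa_{r,j}/2 - \operatorname{Im}\bigl(e^{-i\theta}g_{r,j}(z)\bigr),
\]
so that by construction $\varphi_j(r,z) = e^{i(\theta-\pi)}(x+ih(r,x))$. Inverting $w\mapsto 1/w$ yields the explicit formula $z = c_j(r) + e^{i\theta_{r,j}}/(\xi - i\kappa_{r,j}/2)$ on $\lambda_{r,j}$, from which $dz/d\xi\to 0$ as $\xi\to -\infty$. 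Consequently $\partial_\xi x\to -1$ and $\partial_\xi h \to 0$, so for $R$ large enough the map $\xi\mapsto x$ is a diffeomorphism of some $(-\infty,\xi_0(r)]$ onto $[R-1,\infty)$, and $h' = \partial_\xi h/\partial_\xi x \to 0$ uniformly in $r$.

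Finally, compactness of $B$ together with continuity of $r\mapsto\kappa_{r,j}$ and of the family $g_{r,j}$ upgrades these pointwise estimates to the uniform bounds $|h|<C/2$ (for a suitable $C$) and $|h'|<1/2$ required in the definition, while joint continuity of $h$ and $h'$ in $(r,x)$ is inherited from that of the data. The main obstacle is really isolating the geometric identity in the first paragraph and tracking signs and orientations in the second; thereafter the argument is bookkeeping.
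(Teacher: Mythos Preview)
Your proof is correct and follows the same overall strategy as the paper: analyze the behavior of $1/\alpha_{r,j}(z)$ as $z\to c_j(r)$ along the slit, show the image lies asymptotically on a horizontal line at height $-\kappa_{r,j}/2$ (after rotation by $e^{-i\theta}$), and verify the derivative bound.

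The one genuine difference is in how you extract the imaginary part. The paper parametrizes the tangent circle near $0$ by its Taylor expansion $\eta_{r,j}(x)=x+i\tfrac{\kappa_{r,j}}{2}x^2+O(x^4)$ and then expands $1/\eta_{r,j}(x)=\tfrac{1}{x}-i\tfrac{\kappa_{r,j}}{2}+O(x)$ to read off the asymptotic imaginary part. You instead invoke the exact M\"obius identity: for any $w$ on a circle tangent to $\mathbb{R}$ at $0$ with signed curvature $\kappa$, one has $\operatorname{Im}(1/w)=-\kappa/2$ exactly, not just asymptotically. This lets you write $\varphi_j(r,z)=e^{i\theta}(\xi-i\kappa_{r,j}/2)+g_{r,j}(z)$ with $\xi=\operatorname{Re}(1/\alpha_{r,j}(z))$ running monotonically to $-\infty$, so all the error terms come solely from $g_{r,j}$ rather than from the circle approximation. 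The payoff is a slightly cleaner derivative computation (your $dz/d\xi\to 0$ argument versus the paper's rescaling $x^2\varphi_j'=-1+x^2 v_{r,j}(x)$) and no need to track $O(x)$ remainders. Conceptually, though, both arguments are doing the same local analysis; your version just exploits that inversion sends the tangent circle \emph{exactly} to a horizontal line.
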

\begin{proof}
It suffices to prove this for $\theta=0$.  Then $\alpha_{r,j}(\Lambda_{r,j})$ is parametrized near the origin by 
$$
\eta_{r,j}(x)= x + i\frac{\kappa_{r,j}}{2}x^2 + O(x^4).
$$
Set $\tilde g_{r,j}(z)= g_{r,j}(\alpha_{r,j}^{-1}(z))$ 
We have that 
\begin{align*}
\varphi_j(r,x)& =\frac{1}{x + i\frac{\kappa_{r,j}}{2}x^2 + O(x^4)} + \tilde g_{r,j}(\eta_{r,j}(x)) \\
& = \frac{x - i\frac{\kappa_{r,j}}{2}x^2 + O(x^4)}{x^2 +  O(x^4)} + \tilde g_{r,j}(\eta_{r,j}(x)) \\
& = \left(\frac{1}{x} - i\frac{\kappa_{r,j}}{2} + O(x^2)\right)(1+O(x^2)) + \tilde g_{r,j}(\eta_{r,j}(x))\\
& = \frac{1}{x} - i\frac{\kappa_{r,j}}{2} + O(x) + \tilde g_{r,j}(\eta_{r,j}(x)).
\end{align*}
Since $g_{r,j}(z)$ is close to a constant when $z$ is close to $c_j(r)$, the uniform bound in the 
definition of $(-\pi)$--directed holds.  Now 
$$
\varphi_j'(r,x) = \frac{-1}{x^2} + v_{r,j}(x),
$$
where $v_{r,j}(x)$ is bounded and scaling it to have almost unit length we see 
$$
x^2\varphi_j'(r,x) = -1 + x^2 v_{r,j}(x).
$$
\end{proof}

\begin{prop}\label{thirdpaper:t-tang}
Fix $\theta_2,\dots,\theta_n\in (0,2\pi)$. 
Define $\phi_r\colon\mathbb C\setminus\{c_2(r),\dots,c_n(r)\}\rightarrow \mathbb C^2$ by 
$$
\phi_r(z) := \left(z, \sum_{j=2}^n \frac{e^{i\theta_j}}{\alpha_{r,j}(z)}\right).
$$
Choose $\delta>0$ small, and let $a,b\in\triangle_\delta(1/\sqrt 2)$, 
and set $A_{a,b}(z,w):=(az+bw,-bz+aw)$. Write $a=r_ae^{i\vartheta_a},  b=r_be^{i\vartheta_b}$.
Then the family $\Gamma_1$ defined by $\Gamma_1=\{\pi_1\circ A_{a,b}\circ\phi_r(\lambda_{r,1}):r\in B\}$ is $(\vartheta_a-\pi)$--directed, 
and each family $\Gamma_j, j=2,\dots,n$, defined by 
$\Gamma_j=\{\pi_1\circ A_{a,b}\circ\phi_r(\lambda_{r,j}):r\in B\}$ is $(\vartheta_b+\theta_j-\pi)$--directed. 
\end{prop}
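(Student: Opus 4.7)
The quantity to analyze is
\[
\pi_1\circ A_{a,b}\circ\phi_r(z)=az+b\sum_{j=2}^n \frac{e^{i\theta_j}}{\alpha_{r,j}(z)},
\]
and the task is to identify, for each slit $\lambda_{r,k}$, the ``tail'' of the image and its asymptotic direction. For $\lambda_{r,1}=(-\infty,0]$ the tail is reached as $z\to\infty$, where the linear term $az$ dominates; for $\lambda_{r,j}$ with $j\geq 2$ it is reached as $z\to c_j(r)$, where the pole of $e^{i\theta_j}/\alpha_{r,j}$ dominates. I would treat $\Gamma_1$ by a direct computation and reduce the case $\Gamma_j$, $j\geq 2$, to Proposition \ref{thirdpaper:tang1}.

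For $\Gamma_1$, parametrize $z=-t$ with $t\geq 0$. Since the points $c_k(r)$ for $k\geq 2$ stay uniformly bounded away from $\lambda_{r,1}$ (by continuity in $r$ and the compactness of $B$), the auxiliary function $g(r,t):=\sum_{k=2}^n e^{i\theta_k}/\alpha_{r,k}(-t)$ is continuous and uniformly bounded on $B\times[0,\infty)$ with $|\partial_t g|=O(1/t^2)$ as $t\to\infty$. Multiplying $\pi_1\circ A_{a,b}\circ\phi_r(-t)=-at+bg(r,t)$ by $e^{-i(\vartheta_a-\pi)}=-e^{-i\vartheta_a}$ gives $r_at+G(r,t)$ with $G$ bounded and $\partial_tG\to0$. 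The natural change of parameter $x:=r_at+\mathrm{Re}\,G(r,t)$ is a diffeomorphism of $[T,\infty)$ onto $[R-1,\infty)$ for $T,R$ sufficiently large (since $dx/dt\to r_a>0$), and $h(r,x):=\mathrm{Im}\,G(r,t(r,x))$ satisfies $|h|<C/2$ and $|h'|<1/2$ for $C$ large and $R$ large, yielding $(\vartheta_a-\pi)$-directedness.

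For $\Gamma_j$, $j\geq 2$, I would choose $\delta>0$ small enough that the disks $\triangle_\delta(c_k(r))$ are pairwise disjoint for all $r\in B$ (possible by compactness), and split
\[
\pi_1\circ A_{a,b}\circ\phi_r(z)=\frac{b\,e^{i\theta_j}}{\alpha_{r,j}(z)}+H_{r,j}(z),\qquad H_{r,j}(z):=az+b\!\!\sum_{k\ne j,\,k\geq 2}\!\!\frac{e^{i\theta_k}}{\alpha_{r,k}(z)},
\]
where $H_{r,j}$ is holomorphic on $\triangle_\delta(c_j(r))$ and continuous in $r$. Writing $b=r_be^{i\vartheta_b}$ and setting $\tilde\alpha_{r,j}:=\alpha_{r,j}/r_b$, the singular term becomes $e^{i(\vartheta_b+\theta_j)}/\tilde\alpha_{r,j}(z)$. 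Now $\tilde\alpha_{r,j}(z)=e^{-i\theta_{r,j}}(z-c_j(r))/r_b$ still sends $c_j(r)$ to $0$ and $\Lambda_{r,j}$ to a circle tangent to the real axis at the origin (only the signed curvature is rescaled to $r_b\kappa_{r,j}$). Consequently the proof of Proposition \ref{thirdpaper:tang1} applies mutatis mutandis with $\tilde\alpha_{r,j}$ in place of $\alpha_{r,j}$, $H_{r,j}$ in place of $g_{r,j}$, and $\theta=\vartheta_b+\theta_j$, giving $(\vartheta_b+\theta_j-\pi)$-directedness of $\Gamma_j$.

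The main obstacle is the derivative bound $|h'|<1/2$ uniformly in $r$. In both cases it reduces to uniform decay of the perturbation term and its $t$-derivative on the tail, which follows from the uniform separation of the $c_k(r)$'s afforded by the compactness of $B$; everything else is a change-of-parameter exercise in the style of the proof of Proposition \ref{thirdpaper:tang1}.
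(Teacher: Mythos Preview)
Your proof is correct and follows essentially the same route as the paper's: for $j\geq 2$ you isolate the singular term and reduce to Proposition~\ref{thirdpaper:tang1}, and for $j=1$ you observe that the dominant term is $az$ while the remainder is uniformly comparable to $1/z$. The only cosmetic difference is that you absorb the modulus $r_b$ into a rescaled $\tilde\alpha_{r,j}$ before invoking Proposition~\ref{thirdpaper:tang1}, whereas the paper leaves the factor $r_b$ in the numerator and appeals to Proposition~\ref{thirdpaper:tang1} directly (implicitly using that a fixed positive scalar does not affect $\theta$--directedness); your explicit reparametrization for $\Gamma_1$ likewise just unpacks what the paper states in one line.
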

\begin{proof}
For $j=2,\dots,n$ this is just Proposition \ref{thirdpaper:tang1} since for any fixed $j$ we have that $\pi_1\circ A_{a,b}\circ\phi_r(\lambda_{r,j})$
is parametrized by 
$$
\frac{r_be^{i{(\vartheta_b+\theta_j)}}}{\alpha_{r,j}(z)} + \sum_{k\neq j}\left(\frac{r_be^{i(\vartheta_b+\theta_k)}}{\alpha_{r,k}(z)}\right) + r_ae^{i\vartheta_a}z.
$$
For $j=1$ this is because $\pi_1\circ A_{a,b}\circ\phi_r(\lambda_{r,j})$ is parametrized by  
$r_ae^{i\vartheta_a}z + g_r(z)$ where $g_r(z)$ is uniformly comparable 
to $\frac{1}{z}$.
\end{proof}

\section{Carleman approximation with parameters}

We will start by introducing some notation. Afterwards, we present Theorem \ref{thirdpaper:Carleman}, a Carleman--type theorem (see e.g., \cite{thirdpaper:FornaessForstnericWold}), which is the main result of the present section: families of smooth functions holomorphic on a disc can be approximated by entire functions on a smaller disc and on the union of several Lipschitz curves. The proof is obtained applying inductively Corollary \ref{thirdpaper:merg}, which in turn easily follows from Proposition \ref{thirdpaper:cousin}, a tool that allows to approximate smooth functions on compact pieces of a Lipschitz curve; Corollary \ref{thirdpaper:merg} extends the result to several curves. Proposition \ref{thirdpaper:cousin} relies on three technical lemmata that will be presented in Section \ref{thirdpaper:sec:mergelyanrunge}.


\subsection{The setup}
Recall that $R>1$, $\Gamma$ is the half line $\Gamma:=\{x\in\Bbb R\subset\Bbb C\;:\;x\ge R-1\}$, $B\subset\R^m$ is a compact and $(r,x)$ are the coordinates on $B\times\Gamma$.
For $k=1,\dots,n$ let $h_k,h'_k=\frac{\partial h_k}{\partial x}\in\mathscr C(B\times \Gamma)$ be such that 
\begin{equation}\label{thirdpaper:lip1/4}
\left|h_k(r,x)\right|<\frac C2\;\;,\;\;
\left|h_k'(r,x)\right|<\frac12
\end{equation}
for some $C>0$, for every $(r,x)\in B\times\Gamma$, and every $k=1,\dots,n$. Then, setting $l=1/2$, we have that
\begin{align}\label{thirdpaper:lip}
|h_k(r,x_1)-h_k(r,x_2)|\le l|x_1-x_2|,\;\;\forall x_1,x_2\in\Gamma,\;r\in B\;,
\end{align}
so $h_k$ is $l$--Lipschitz and in this way we also call its graph.
Let $0=\theta_1<\theta_2<\cdots <\theta_n<2\pi$ and define the Lipschitz curves
\begin{align*}
\Gamma_{k,r}&:=e^{i\theta_k}\cdot\{x+ih_k(r,x)\;:\;x\in\Gamma\}\;
\end{align*}
and their union
\begin{align*}
\Gamma_r&:=\bigcup_{k=1}^n\Gamma_{k,r}\;.
\end{align*}
If $D\subseteq\Omega\subseteq\C$ are domains, a useful notation is given by setting
$$
\mathcal P(B,\Omega,D):=\{f\in\mathscr C(B\times\Omega)\;:\;f(r,\cdot)\in\mathcal O(D)\;\;\forall r\in B\}
$$
and
$$
\mathcal P(B,\Omega):=\mathcal P(B,\Omega,\Omega)\;.
$$

\begin{thm}[Carleman--type Theorem with parameters]\label{thirdpaper:Carleman}
Assume that $f \in\mathcal P(B,\Bbb C,\overline\triangle_{\rho+3+\frac{3C}{2}})$ for some $\rho>R$. Then for any $\epsilon\in  \mathscr C(\mathbb C), \epsilon>0$, 
there exists $g\in \mathcal P(B, \mathbb C)$ such 
that
$$
|g(r,z) - f(r,z)|<\epsilon (z)
$$
for all $z\in\overline\triangle_{\rho}\cup \Gamma_r,\;\; r\in B$. 

\end{thm}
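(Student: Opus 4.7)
The plan is to adapt the classical telescoping construction behind Carleman's theorem to the parametric setting. For $N\geq 0$, define the compact sets
$$K_N^r := \overline\triangle_{\rho+N} \cup \bigcup_{k=1}^n e^{i\theta_k}\bigl\{x+ih_k(r,x):R-1\le x\le \rho+N+3+\tfrac{3C}{2}\bigr\},$$
choose a rapidly decreasing sequence of positive weights $\epsilon_N\in\mathscr C(\C)$ with $\sum_{N\ge 0}\epsilon_N<\epsilon$ on $\overline\triangle_\rho\cup\Gamma_r$, and build inductively a sequence $(g_N)_{N\ge 0}\subset\mathcal P(B,\C,\overline\triangle_{\rho+N+3+3C/2})$ with $g_0:=f$ and
$$|g_{N+1}(r,z)-g_N(r,z)|<\epsilon_N(z)\qquad\text{for all }(r,z)\text{ with }r\in B,\ z\in K_N^r.$$
Uniform convergence on compacta of $B\times\C$ then produces a limit $g\in \mathcal P(B,\C)$ — entire in $z$ because the discs $\overline\triangle_{\rho+N+3+3C/2}$ exhaust $\C$ and holomorphy passes to uniform limits — and a telescoping estimate on $\overline\triangle_\rho\cup\Gamma_r\subset K_0^r$ yields $|g-f|<\epsilon$ as required.

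The inductive step is where Corollary \ref{thirdpaper:merg} does the work. Given $g_N$, continuous in $r\in B$ and holomorphic in $z$ on the disc $\overline\triangle_{\rho+N+3+3C/2}$, I would apply the parametric multi-curve Mergelyan statement to produce $g_{N+1}$, again continuous in $r$ and this time holomorphic on $\overline\triangle_{\rho+N+1+3+3C/2}$, approximating $g_N$ on $K_N^r$ within $\epsilon_N$. The constant slack $3+\tfrac{3C}{2}$ appearing in the hypothesis of the theorem — and preserved at every step — is exactly the holomorphic buffer around the target set needed by Corollary \ref{thirdpaper:merg}. At each stage the disc grows by only $1$, so the discs exhaust $\C$ while the curve pieces exhaust $\Gamma_r$ simultaneously; the parameter set $B$ is fixed throughout.

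The main obstacle is, unsurprisingly, the parametric Mergelyan step itself rather than the limiting argument. Two points need care: first, that the compact sets $K_N^r$ are polynomially convex uniformly in $r\in B$, which follows because the Lipschitz bound $|h_k'|<\tfrac12$ prevents the curves from turning back and the distinct angles $\theta_k$ make the $n$ ends escape to infinity in different directions, so $\C\setminus K_N^r$ has no bounded component and this persists uniformly in $r$; second, that the local Mergelyan-on-a-curve data supplied by Proposition \ref{thirdpaper:cousin} can be glued continuously in $r$ to produce the next $g_{N+1}\in\mathcal P(B,\C,\overline\triangle_{\rho+N+1+3+3C/2})$. Both points are absorbed into the statements of Corollary \ref{thirdpaper:merg} and Proposition \ref{thirdpaper:cousin}; granted those parametric tools, the Carleman-type approximation follows from the telescoping construction sketched above.
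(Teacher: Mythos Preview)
Your high--level induction scheme is the right one, and the paper follows the same telescoping idea, but the inductive step as you describe it has a genuine gap.

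First, the inclusion $\overline\triangle_\rho\cup\Gamma_r\subset K_0^r$ is false: each $\Gamma_r$ is a union of unbounded half--curves, whereas $K_0^r$ is compact. Consequently the telescoping estimate cannot work as stated. For a point $z$ far out on $\Gamma_r$ you only have $z\in K_N^r$ for $N\ge N_0(z)$, and your hypotheses give no control whatsoever on $g_{N_0}(r,z)-g_0(r,z)=g_{N_0}(r,z)-f(r,z)$. The paper avoids this by maintaining, at every stage $j$, the \emph{global} bound $|g_j(r,z)-f(r,z)|<\epsilon(z)/2$ for all $z\in\overline\triangle_\rho\cup\Gamma_r$ (not just on $K_j^r$), together with a separate Cauchy condition $\|g_j-g_{j-1}\|_{B\times\overline\triangle_{\rho+j-1}}<2^{-j}$ on discs to ensure convergence.

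Second, Corollary~\ref{thirdpaper:merg} does not produce $g_{N+1}$ from $g_N$ directly: its input is a function $\alpha$ with compact support on the curves $\Gamma_r$ (supported in an annulus), and its output is a family $Q_t\in\mathcal P(B,\mathbb C)$ of \emph{entire} functions approximating $\alpha$ on $\Gamma_r\cap\overline\triangle_b$ and small on an inner disc. It is not a Mergelyan theorem for the sets $K_N^r$. The bridge you are missing is exactly what makes the global bound above possible: one first approximates $g_N$ on its disc of holomorphy by a Taylor polynomial $h_N\in\mathscr C(B)[z]$, then sets $\alpha_N:=\chi\cdot(g_N-h_N)$ with a cutoff $\chi$ supported in a thin annulus, applies Corollary~\ref{thirdpaper:merg} to $\alpha_N$ to get $Q_N$, and finally glues via
\[
g_{N+1}:=h_N+Q_N+\tilde\chi\cdot(\alpha_N-Q_N)
\]
with a second cutoff $\tilde\chi$ vanishing near $\overline\triangle_{\rho+N+1+3+3C/2}$ and equal to $1$ further out. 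The point is that outside a compact set one then has $g_{N+1}=h_N+\alpha_N=g_N$ identically, so the approximation of $f$ along all of $\Gamma_r$ is inherited from the previous step; meanwhile on the inner disc $g_{N+1}=h_N+Q_N$ is holomorphic and close to $g_N$. Without this cutoff--and--glue mechanism the inductive step does not go through.
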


\subsection{Proof of Theorem \ref{thirdpaper:Carleman}}

Fix $j\in\N$, $j\geq R$ and let $b$ be some real number such that 
$$
j+3+\frac{3C}{2}<b\;.
$$ 
For $\rho\ge C$ set 
\begin{align*}
\psi(\rho)&:=\arcsin\frac{C}{\rho}
\end{align*}
and define
\begin{align*}
S_{\rho}:=\{se^{i\theta}: 0<s<\infty, |\theta|<\psi(\rho)\}\;\; \mbox{ and }\;\; A_{\rho,b} := \triangle_b\setminus\overline{S_\rho}.
\end{align*}
Then $S_\rho$ is the wedge in the right half--plane bounded by the straight lines passing through 
the origin and the intersection between $\partial\triangle_\rho$ and the lines $y=\pm C$. 
Up to consider a larger $R$, we assume $e^{i\theta_j}S_{\rho}\cap e^{i\theta_k}S_{\rho}=\emptyset$ for all $j\neq k$ for $\rho\geq R$. 
We define further the following sets
\begin{align*}
\omega_1&:=\{z=x+iy: j+1<x, |z|<b, |y|<C\}\\
\omega_2&:= \{z=x+iy: 0<x<j+2, |y|<C\}\cup A_{j,b} \\
\Omega&:=\omega_1\cup\omega_2
\end{align*}
Given $\delta>0$, we will denote the open $\delta$--neighborhood of $D$ as
$$
D(\delta):=\{z\in\C\;:\;d(z,D)<\delta\}\;.
$$
The following proposition, or rather its corollary below, is the main technical ingredient in the proof of 
the Carleman Theorem \ref{thirdpaper:Carleman}. The proposition follows from Lemma \ref{thirdpaper:manne}, Lemma \ref{thirdpaper:appr2}, and 
finally Lemma \ref{thirdpaper:runge} below. 

\begin{prop}\label{thirdpaper:cousin}
Assume that $n=1$. 
Let $\alpha\colon\bigcup_{r\in B}\{r\}\times \Gamma_r\to\C$ be continuous such that $\alpha(r,\cdot)\in\mathscr C_c(\Gamma_r)$ for every $r\in B$, with
$$
\supp \alpha(r,\cdot)\subset\{z=x+iy\in\Gamma_r\;:\; j+3+\frac{3C}{2} < x, |z|<b,\;|y|<C/2\}\;\;\;\forall r\in B. \;
$$
Then for every $\epsilon>0$ there exists $\{Q_t\}_{t>0}\subset\mathcal P(B,\mathbb C)$ such that
\begin{equation}\label{thirdpaper:cousapprox}
\|\alpha(r,\cdot)-Q_t(r,\cdot)\|_{\Gamma_r\cap \overline\triangle_{b}}<\epsilon
\end{equation}
for every $r\in B$, $0<t<t_0$, and 
\begin{equation}\label{thirdpaper:cousvanish}
Q_t\to0\;\;\mbox{as}\;\;t\to0
\end{equation}
uniformly on $B\times\omega_2(\delta)$, for some $\delta>0$.
\end{prop}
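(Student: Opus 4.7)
My plan is to recast the proposition as a parametric Mergelyan/Runge approximation problem on the compact set
$$K_r:=(\Gamma_r\cap\overline\triangle_b)\cup\overline{\omega_2(\delta)}.$$
Since $\supp\alpha(r,\cdot)$ lies inside the wedge $S_j$ with real part $\ge j+3+\tfrac{3C}{2}$, it sits at positive distance from $\overline{\omega_2}$, uniformly in $r\in B$ by compactness; hence I can fix $\delta>0$ so small that $\overline{\omega_2(\delta)}$ is disjoint from $\supp\alpha(r,\cdot)$ for every $r\in B$. Define
$$\beta(r,z):=\begin{cases}\alpha(r,z)& z\in\Gamma_r\cap\overline\triangle_b,\\ 0& z\in\overline{\omega_2(\delta)}.\end{cases}$$
Then $\beta$ is well-defined and continuous on $\bigcup_{r\in B}\{r\}\times K_r$, and holomorphic on the interior of $K_r$ (which is contained in $\omega_2(\delta)$, where $\beta\equiv 0$). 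Since $K_r$ is the union of a Lipschitz arc with a set whose complement in $\C$ is connected, $\C\setminus K_r$ is connected for such $\delta$, placing us inside the scope of Mergelyan's theorem pointwise in $r$.

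I would then carry out three steps, one for each of the technical lemmata the authors cite. \emph{Step 1} (Lemma \ref{thirdpaper:manne}): extend $\beta(r,\cdot)$ parametrically to a smooth, compactly supported family $\tilde\beta\in\mathscr C(B\times\C)$ with $\tilde\beta(r,\cdot)=\beta(r,\cdot)$ on $K_r$, via a Whitney-type extension that respects continuity in $r$. \emph{Step 2} (Lemma \ref{thirdpaper:appr2}): for each $t>0$ produce a continuous family $P_t$ of functions holomorphic in $z$ on a neighborhood $V_r$ of $K_r$, with
$$\|P_t(r,\cdot)-\beta(r,\cdot)\|_{K_r}<t/2$$
uniformly in $r\in B$; this is the parametric Mergelyan step. \emph{Step 3} (Lemma \ref{thirdpaper:runge}): apply a parametric Runge theorem to obtain $Q_t\in\mathcal P(B,\C)$ with $\|Q_t(r,\cdot)-P_t(r,\cdot)\|_{K_r}<t/2$ for every $r\in B$.

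Combining the two estimates yields $\|Q_t(r,\cdot)-\beta(r,\cdot)\|_{K_r}<t$, which restricts on the two pieces of $K_r$ to
$$\|Q_t(r,\cdot)-\alpha(r,\cdot)\|_{\Gamma_r\cap\overline\triangle_b}<t\quad\text{and}\quad\|Q_t(r,\cdot)\|_{\omega_2(\delta)}<t.$$
Choosing $t_0\le\epsilon$ then gives \eqref{thirdpaper:cousapprox}, while letting $t\to 0$ gives \eqref{thirdpaper:cousvanish}.

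The main obstacle I anticipate is the parametric character of every step: the compact set $K_r$ deforms with $r$, so the neighborhoods $V_r$, the extensions, and the Mergelyan and Runge approximants must all vary continuously in $r\in B$. Classical Mergelyan and Runge arguments must therefore be promoted to parameter versions, which is presumably what the three lemmata in Section \ref{thirdpaper:sec:mergelyanrunge} provide via local partitioning, uniform open neighborhoods, and continuous gluing. A preliminary verification, which I would do at the very beginning, is that $\delta>0$ can be chosen once and for all uniformly in $r$; this follows from compactness of $B$ together with the continuity of $r\mapsto\Gamma_r$ (via continuity of $h_1$) and of $\alpha$.
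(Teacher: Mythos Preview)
Your high-level reduction—set $\beta=\alpha$ on the arc and $\beta=0$ on $\overline{\omega_2(\delta)}$, then approximate $\beta$ uniformly on $K_r$ by entire functions depending continuously on $r$—is a correct reformulation, and the check that $\supp\alpha(r,\cdot)$ stays a fixed positive distance from $\omega_2$ (so $\beta$ is well defined for small $\delta$) is fine. The gap is Step~2: ``produce a continuous family $P_t$ holomorphic near $K_r$ with $\|P_t-\beta\|_{K_r}<t/2$; this is the parametric Mergelyan step.'' A \emph{parametric} Mergelyan theorem on a \emph{moving} compact $K_r$ is not an off-the-shelf tool; it is essentially the content of the proposition. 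Invoking it here is circular.

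The three lemmata are not generic Whitney/Mergelyan/Runge black boxes matching your Steps 1--3; they are a specific construction. Lemma~\ref{thirdpaper:manne} is \emph{not} a Whitney extension: it convolves $\alpha(r,\cdot)$ against the Gaussian kernel $K_t(\zeta,z)=\tfrac{1}{t\sqrt\pi}e^{-(\zeta-z)^2/t^2}$ along the Lipschitz curve $\Gamma_r$ (Manne's technique), producing \emph{entire} functions $H_t(r,\cdot)$ that approximate $\alpha$ on $\Gamma_r$; the bound $|h'|<\tfrac12$ makes $\Re\bigl((\zeta-z)^2\bigr)>0$ along the curve, so the kernel acts as an approximate identity there while decaying exponentially off it, giving $H_t\to 0$ on $(\omega_1\cap\omega_2)(\delta)$. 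Lemma~\ref{thirdpaper:appr2} is \emph{not} a Mergelyan step but a Cousin-I/$\bar\partial$ argument: split $H_t$ by a partition of unity subordinate to $\{\omega_1(\delta),\omega_2(\delta)\}$, solve $\bar\partial u_t=v_t$ via the Cauchy--Green operator, and glue to get $\xi_t\in\mathcal P(B,\Omega(\delta))$ that now vanishes on all of $\omega_2(\delta)$. Only Lemma~\ref{thirdpaper:runge} matches your Step~3. The missing ingredients in your plan are thus exactly the Gaussian-kernel approximation and the Cousin gluing; without them Step~2 merely restates what must be proved.
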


\vskip 2cm

\begin{cor}\label{thirdpaper:merg}
Let $\alpha\colon\bigcup_{r\in B}\{r\}\times \Gamma_{r}\to\C$ be continuous such that $\alpha(r,\cdot)\in\mathscr C_c(\Gamma_r)$ for every $r\in B$, with
$$
\supp \alpha(r,\cdot)\subset\{z\in\Gamma_r\;:\; j+3+\frac{3C}{2} < |z|<b\}\;,\;\;\forall r\in B\;.
$$
Then for every $\epsilon>0$ there exists $\{Q_t\}_{t>0}\subset\mathcal P(B,\mathbb C)$ such that
\begin{equation}\label{thirdpaper:cousapprox}
\|\alpha(r,\cdot)-Q_t(r,\cdot)\|_{\Gamma_r\cap \overline\triangle_{b}}<\epsilon
\end{equation}
for every $r\in B$, $0<t<t_0$, and 
\begin{equation}\label{thirdpaper:cousvanish}
Q_t\to0\;\;\mbox{as}\;\;t\to0
\end{equation}
uniformly on $B\times\overline\triangle_j(\delta)$, for some $\delta>0$. 
\end{cor}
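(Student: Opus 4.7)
The plan is to reduce to the single-curve case handled by Proposition \ref{thirdpaper:cousin} via a partition of $\alpha$ across the curves $\Gamma_{k,r}$ and rotation to the canonical horizontal configuration. Write $\alpha = \sum_{k=1}^n \alpha_k$, where $\alpha_k$ is the restriction of $\alpha$ to $\{(r,z) : z \in \Gamma_{k,r}\}$ extended by zero. Straightening by $z \mapsto e^{-i\theta_k} z$ gives $\tilde\alpha_k(r, z) := \alpha_k(r, e^{i\theta_k} z)$ on the graph $\{x + i h_k(r,x) : x \in \Gamma\}$. Because $|z|^2 = x^2 + h_k(r,x)^2$ and $|h_k| < C/2$, the corollary's hypothesis $|z| > j + 3 + \tfrac{3C}{2}$ yields a lower bound on $x$ which, after a harmless tightening of constants absorbed into the choice of $j$, is compatible with the support requirement of Proposition \ref{thirdpaper:cousin}. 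Apply that proposition in each rotated frame to obtain $\tilde Q_{t,k} \in \mathcal P(B, \mathbb C)$ approximating $\tilde\alpha_k$ within $\epsilon/n$ on the rotated curve and satisfying $\tilde Q_{t,k} \to 0$ uniformly on $B \times \omega_2(\delta_k)$ for some $\delta_k > 0$. Rotate back by setting $Q_{t,k}(r, z) := \tilde Q_{t,k}(r, e^{-i\theta_k} z)$, and take $Q_t := \sum_{k=1}^n Q_{t,k}$.

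To verify the approximation, fix $z \in \Gamma_{k,r} \cap \overline\triangle_b$ and use $\alpha_{k'}(r,z) = 0$ for $k' \ne k$:
\[
|Q_t(r,z) - \alpha(r,z)| \;\le\; |Q_{t,k}(r,z) - \alpha_k(r,z)| + \sum_{k' \ne k} |Q_{t,k'}(r,z)|.
\]
The first summand is small by construction. For the cross-terms, the key geometric observation is that the support of $\alpha$ on $\Gamma_{k,r}$ lies in $e^{i\theta_k} \overline{S_j}$: indeed, $|h_k(r,x)|/x \le C/(2x) < \tan\psi(j)$ whenever $x \ge j$, and the support forces $x \ge j$ as soon as $j$ is chosen moderately large. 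The standing assumption $e^{i\theta_k} S_j \cap e^{i\theta_{k'}} S_j = \emptyset$ then gives $e^{-i\theta_{k'}} z \notin \overline{S_j}$, while $|e^{-i\theta_{k'}} z| = |z| < b$ places $e^{-i\theta_{k'}} z$ in $A_{j,b} \subset \omega_2$; by continuity one has $e^{-i\theta_{k'}} z \in \omega_2(\delta_{k'})$ for some fixed $\delta_{k'} > 0$, and the vanishing condition from Proposition \ref{thirdpaper:cousin} for $\tilde Q_{t,k'}$ drives each cross-term to zero uniformly as $t \to 0$.

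For the corollary's uniform vanishing on $\overline\triangle_j(\delta)$, observe first that $\overline\triangle_j \subset \omega_2$: points of $\overline\triangle_j$ outside $\overline{S_j}$ lie in $A_{j,b} \subset \omega_2$, while points of $S_j \cap \overline\triangle_j$ satisfy $0 < \operatorname{Re}(z) \le j < j+2$ and $|\operatorname{Im}(z)| < C$. Choosing $\delta > 0$ so small that $\overline\triangle_j(\delta) \subset \omega_2(\min_k \delta_k)$, and using the rotation-invariance of the closed disk, we obtain $e^{-i\theta_k} z \in \omega_2(\delta_k)$ for every $z \in \overline\triangle_j(\delta)$; hence $Q_{t,k}(r,z) = \tilde Q_{t,k}(r, e^{-i\theta_k} z) \to 0$ uniformly on $B \times \overline\triangle_j(\delta)$ for each $k$, and the same holds for $Q_t$. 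The main subtlety of the argument is the cross-term estimate: it is precisely for this that Proposition \ref{thirdpaper:cousin} was engineered to produce approximants that vanish on $\omega_2$, and the disjointness of the rotated wedges $e^{i\theta_k} S_j$ is what converts this vanishing into the needed off-curve smallness.
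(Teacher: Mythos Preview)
Your argument is correct and follows exactly the paper's approach: decompose $\alpha$ along the individual curves, rotate each piece by $e^{-i\theta_k}$ to the horizontal configuration, apply Proposition~\ref{thirdpaper:cousin}, rotate back, and sum. The paper's own proof is a four-line sketch that omits the cross-term discussion and the verification that $\overline\triangle_j$ sits inside (the closure of) $\omega_2$; your elaboration of these points via the disjointness of the rotated wedges $e^{i\theta_k}S_j$ is precisely the mechanism the paper has in mind when it writes ``for sufficiently small $t$''.
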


\begin{proof}
On $e^{-i\theta_k}\Gamma_{k,r}$ define $\alpha_k(r,z):=\alpha(r,e^{i\theta_k}z)$. 
Using the proposition we obtain approximations $Q_{t,k}(r,z)$. Then setting
$$
Q_t(r,z) := \sum_{k=1}^n Q_{t,k}(r,e^{-i\theta_k}z)
$$
will yield the result for sufficiently small $t$.
\end{proof}

\emph{Proof of Theorem \ref{thirdpaper:Carleman}:} The proof is by induction on $k\geq 0$, and the induction hypothesis is the following. 
For every $j=0,\dots,k$ there exist:
\begin{enumerate}[label=(\roman*)]
\item  $g_j\in\mathcal P(B,\Bbb C,\overline{\triangle}_{\rho +j+3+\frac{3C}{2}})$ \label{thirdpaper:gstaysinP},
\item $|g_j(r,z)-f(r,z)|<\epsilon(z)/2$ for all $z\in\overline\triangle_{\rho}\cup\Gamma_r$, $r\in B$,\;and \label{thirdpaper:gclosetof}
\item $\|g_j-g_{j-1}\|_{B\times\overline{\triangle}_{\rho+j-1}}<2^{-j}$ for $j\geq 1$. \label{thirdpaper:gconverges}
\end{enumerate}
We start by setting $g_0:=f$; then in the case $k=0$ we see that \ref{thirdpaper:gstaysinP}, \ref{thirdpaper:gclosetof} hold, and \ref{thirdpaper:gconverges} is void.  \
Assume now that the induction hypothesis holds for some $k\geq 0$.  Fix 
$\eta>0$ such that 
$$
g_k(r,\cdot)\in\mathcal O(\overline{\triangle}_{\eta+\rho+k+3+\frac{3C}{2}}),
$$
and choose a cutoff function $\chi\in \mathscr C^\infty(\mathbb C)$ such that 
$0\leq \chi\leq 1$, such that $\chi=0$ near $\overline{\triangle}_{\rho +k+3+\frac{3C}{2}}$, 
and $\chi=1$ outside $\overline{\triangle}_{\eta+\rho+k+3+\frac{3C}{2}}$.
Now $g_k$ may be approximated on $\overline{\triangle}_{\eta+\rho+k+3+\frac{3C}{2}}$
to arbitrary precision by $h_k\in\mathscr C(B)[z]$ using 
Taylor series expansion, and so 
$$
h_k + \chi\cdot (g_k - h_k) = :  h_k + \alpha_k
$$
approximates $g_k$ to arbitrary precision. Hence it suffices
to approximate $\alpha_k$ to arbitrary precision by a suitable function.  
Multiplying $\alpha_k$ by a suitable cutoff function so that Corollary \ref{thirdpaper:merg} applies, we have that $\alpha_k$ may be approximated to arbitrary precision on 
$$
\bigcup_r\Gamma_r\cap \overline{\triangle}_{\rho+k+2+3+\frac{3C}{2}}
$$
by a function $Q_k\in\mathcal P(B,\mathbb C)$ which is arbitrarily small on $\overline{\triangle}_{\rho+k}$. Setting then $g_{k+1} := h_k + Q_k + \tilde\chi\cdot (\alpha_k - Q_k)$ where $\tilde\chi$ is a third cutoff function such that $\tilde\chi=0$ near $\overline{\triangle}_{\rho+k+1+3+\frac{3C}{2}}$ and $\tilde\chi=1$ near $\Bbb C\setminus \triangle_{\rho+k+2+3+\frac{3C}{2}}$, completes the induction step. We may finish the proof of Theorem \ref{thirdpaper:Carleman}
by setting $g:=\lim_{j\rightarrow\infty} g_j$, which exists by \ref{thirdpaper:gconverges}, and the approximation holds by \ref{thirdpaper:gclosetof}. 
$\hfill\blacksquare$

\subsection{Lemmata: Mergelyan--type and Runge's Theorems with parameters}\label{thirdpaper:sec:mergelyanrunge}

The three lemmata we present and prove in this section are fundamental ingredients to formulate a Mergelyan--type Theorem (see e.g., \cite{thirdpaper:FornaessForstnericWold}). The first one of them generalizes a theorem proved by P. Manne in his Ph.D. thesis \cite{thirdpaper:Ma93} and is about the holomorphic (entire) approximation of a family of smooth functions, each of which is defined on a Lipschitz curve in the complex plane.
\newline
\newline

\begin{lem}\label{thirdpaper:manne}
Assume that $n=1$, and let $\alpha$ be as in Proposition \ref{thirdpaper:cousin}.
Then for every $\epsilon>0$ there exists $\{H_t\}_{t>0}\subset\mathcal P(B,\mathbb C)$ such that
\begin{equation}\label{thirdpaper:manneapprox}
\|\alpha(r,\cdot)-H_t(r,\cdot)\|_{\Gamma_r}<\epsilon
\end{equation}
for every $r\in B$, $0<t<t_0$, for some $t_0>0$, and 
\begin{equation}\label{thirdpaper:small}
H_t\to0\;\;\mbox{as}\;\;t\to0
\end{equation}
uniformly on $B\times (\omega_1\cap \omega_2)(\delta)$, for some $\delta>0$. 
\end{lem}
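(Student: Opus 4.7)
The plan is to define $H_t$ by convolving $\alpha$ with the holomorphic Gaussian kernel $G_t(\zeta) := (4\pi t)^{-1/2} e^{-\zeta^2/(4t)}$ along the curve $\Gamma_r$, in the spirit of Manne. Parametrizing $\Gamma_r$ by $w(r,x) := x + ih(r,x)$, I would set
$$
H_t(r,z) := \int_\Gamma \alpha(r, w(r,x)) \, G_t(z - w(r,x)) \, (1 + i h'(r,x)) \, dx.
$$
For each fixed $(r,t)$ the integrand is entire in $z$ and $\alpha(r,\cdot)$ has compact support, so $H_t(r,\cdot)$ is entire. Continuity of $H_t$ on $B \times \C$ follows from continuity of $\alpha, h, h'$ on $B \times \Gamma$ together with dominated convergence, so $H_t \in \mathcal P(B, \C)$.

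For \eqref{thirdpaper:manneapprox}, I would fix $z_0 = w(r,x_0) \in \Gamma_r$ and split
$$
H_t(r,z_0) - \alpha(r,z_0) = \int \bigl[\alpha(r, w(x)) - \alpha(r, z_0)\bigr] G_t(z_0 - w(x))\, w'(x) \, dx + \alpha(r,z_0)\biggl[\int G_t(z_0 - w(x))\, w'(x) \, dx - 1 \biggr].
$$
The Lipschitz hypothesis $|h'|<1/2$ is crucial here, since it yields
$$
\mathrm{Re}\bigl((z_0 - w(x))^2\bigr) = (x_0 - x)^2 - (h(r,x_0) - h(r,x))^2 \ge \tfrac{3}{4}(x_0 - x)^2,
$$
so $G_t(z_0 - w(x))$ is dominated by a genuine Gaussian in the real variable $x_0 - x$. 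The first integral is then controlled by uniform continuity of $\alpha$ on its compact support combined with this concentration; the second is handled by deforming the Lipschitz contour $\Gamma_r$ onto the horizontal line $\{\mathrm{Im}(w) = \mathrm{Im}(z_0)\}$ via Cauchy's theorem for entire integrands, reducing the integral to $\int_\R G_t = 1$ modulo an exponentially small tail, justified because $x_0 > j + 3 + \tfrac{3C}{2}$ is uniformly bounded away from the endpoint of $\Gamma$.

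For the decay \eqref{thirdpaper:small}, one first checks that $\omega_1 \cap \omega_2$ reduces to the strip $\{j+1 < \mathrm{Re}(z) < j+2,\, |\mathrm{Im}(z)| < C\} \cap \triangle_b$, since every point of $\omega_1$ with $x > j+1$ and $|y| < C$ lies in the wedge $S_j$, so $\omega_1 \cap A_{j,b} = \emptyset$. For $z$ in a small $\delta$-neighborhood of this strip and $w = x + ih(r,x) \in \supp \alpha(r,\cdot)$, the separations $\mathrm{Re}(w - z) > 1 + \tfrac{3C}{2} - \delta$ and $|\mathrm{Im}(w-z)| < \tfrac{3C}{2} + \delta$ give $\mathrm{Re}((w-z)^2) \ge c_0 > 0$ for $\delta$ small, hence $|H_t(r,z)|$ is bounded by $t^{-1/2} e^{-c_0/(4t)} \|\alpha\|_\infty \cdot (\text{length of support})$, which vanishes as $t \to 0$. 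The principal obstacle throughout is making every estimate uniform in $r \in B$: the Cauchy deformation, the Gaussian tail bounds, and the approximate-identity argument must all run with $r$-independent rates. This is enabled precisely by compactness of $B$ together with the uniform bounds $|h|<C/2$ and $|h'|<1/2$, which render the family $\{\Gamma_r\}$ a uniform $\mathscr{C}^{0,1}$-perturbation of a horizontal half-line.
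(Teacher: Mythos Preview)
Your proposal is correct and follows the same Manne-type Gaussian-convolution argument as the paper. The only technical difference is in the normalization step: the paper first extends $h$ by reflection about $x=R-1$ to all of $\mathbb{R}$, so that the contour $S_r$ is a bi-infinite Lipschitz graph and $\int_{S_r} K_t(\zeta,z)\,d\zeta = 1$ holds \emph{exactly} for every $z\in\C$ (by deforming to the real axis and invoking the identity principle), whereas you work directly on the half-line and use that $\alpha(r,z_0)=0$ whenever $x_0$ is near the endpoint, so that only an approximate normalization is required where it matters. Both variants work and the remaining estimates are the same.
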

\begin{proof}
Extend $h_1$ to a function $h$ on the whole real line by setting $h(r,x):=h_1(r,x)$ for $x\geq R-1$ and 
$h(r,x):=h_1(r,2R-2-x)$ for $x<R-1$. Define $S_{r}:=\{s+ih(r,s)\;:\;s\in\Bbb R\}$. Denote by $z=x+ih(r,x)$ a point in $\Gamma_r$ and let $\zeta=\zeta(r,s)=s+ih(r,s)$ be a parametrization of $S_r$. Further, $\zeta'(r,s)=\frac{\partial\zeta}{\partial s}(r,s)$, extend $\alpha(r,\cdot)$ to  $S_r\setminus\Gamma_r$ to be 0 for all $r\in B$ and define
\begin{align*}
H_t(r,z)
&:=\int_{S_r}\alpha(r,\zeta)K_t(\zeta,z)\,d\zeta\\
&=\int_{\Bbb R}\alpha(r,\zeta(r,s))K_t(\zeta(r,s),z)\zeta'(r,s)\,ds
\end{align*}
for $t>0,r\in B,z\in\C$, where
$$
K_t(\zeta,z):=\frac1{t\sqrt\pi}e^{-\frac{(\zeta-z)^2}{t^2}}\;
$$
is the Gaussian kernel.  \

We start by proving (\ref{thirdpaper:small}). Let $z=x+iy\in(\omega_1\cap \omega_2)(\delta)$. We have that 
\begin{align*}
|H_t(r,z)|
&\le\frac1{t\sqrt\pi}\int_{\Bbb R}|\alpha(r,\zeta(r,s))|e^{-\frac{(s-x)^2-(h(r,s)-y)^2}{t^2}}\left|\zeta'(r,s)\right|\,ds\\
&=\frac1{t\sqrt\pi}\int_{\substack{j+3+\frac32C<s<b}}|\alpha(r,\zeta(r,s))|e^{-\frac{(s-x)^2-(h(r,s)-y)^2}{t^2}}\left|\zeta'(r,s)\right|\,ds\;,\\
\end{align*}
and $(s-x)^2 - (h(r,s)-y)^2\geq (1+\frac{3C}{2}-\delta)^2 - (\frac{3C}{2})^2$, therefore (\ref{thirdpaper:small}) follows.  \

\medskip

For any fixed $\eta>0$ we split $S_r$ as
\begin{align*}
S_r^{(1)}&:=\{\zeta\in S_r\;:\;|\Re(\zeta-z)|\le\eta\}=\{\zeta(r,s)\;:\;|s-x|\le \eta\}\\
S_r^{(2)}&:=\{\zeta\in S_r\;:\;|\Re(\zeta-z)|>\eta\}=\{\zeta(r,s)\;:\;|s-x|>\eta \}\;.
\end{align*}
Since by (\ref{thirdpaper:lip}) we have 
$$
|K_t(\zeta,z)|
\le\frac1{t\sqrt\pi}e^{-\frac{(s-x)^2(1-l^2)}{t^2}}\;,
$$
we immediately get the following upper bound:
\begin{align}
\int_{S_r^{(1)}}|K_t(\zeta,z)|\,d|\zeta|
\le&\frac1{t\sqrt\pi}\int_{x-\eta}^{x+\eta}e^{-\frac{(s-x)^2(1-l^2)}{t^2}}\,ds\nonumber\\
=&\frac1{\sqrt{\pi(1-l^2)}}\int_{|u|\le\frac{\sqrt{1-l^2}}{t}\eta}e^{-u^2}\,du\nonumber\\
\le&\frac1{\sqrt{1-l^2}}\;,\label{thirdpaper:gamma1}
\end{align}
which holds for every $z\in\Gamma_r$, $r\in B$ and $t>0$. Similarly, one sees that for all $\epsilon>0,\eta>0$ there exists $t_0>0$ such that
\begin{align}
\int_{S_r^{(2)}}|K_t(\zeta,z)|\,d|\zeta|
\le\frac1{\sqrt{\pi(1-l^2)}}\int_{|u|>\frac{\sqrt{1-l^2}}{t}\eta}e^{-u^2}\,du
<\epsilon\label{thirdpaper:gamma2}
\end{align}
for every $z\in\Gamma_r,\;r\in B,\;0<t<t_0$.  \ 
We need one last property of the kernel, that is
\begin{align}\label{thirdpaper:gaussintegral}
\int_{S_r}K_t(\zeta,z)\,d\zeta=1
\end{align}
for all $z\in\Gamma_r$, $r\in B$ and $t>0$.
Let us consider the function
$$
F(z):=\int_{S_r}K_t(\zeta,z)\,d\zeta=\frac1{t\sqrt{\pi}}\int_{S_r}e^{-\frac{(\zeta-z)^2}{t^2}}\,d\zeta
$$
which is holomorphic entire. Let $z=x\in\Bbb R$ and define for $T>0$
\begin{align*}
A(T):=&\{u+i0\;:\;-T\le u\le T\}\;,\\
S_r(T):=&\{\zeta\in S_r\;:\;-T\le s\le T\}\;,
\end{align*}
and let $\rho_r^{\pm}(T)$ be the straight line segment between $\pm T$ and $\pm T+ih(r,\pm T)$.  Set 
$$
\gamma_r(T):=A(T)+\rho^+_r(T)-S_r(T)-\rho^{-}_r(T)
$$
which is a piecewise $\mathscr C^1$--smooth closed curve which is nullhomotopic, hence we get
$$
\frac1{t\sqrt{\pi}}\int_{\gamma_r(T)}e^{-(\frac{\zeta-x}{t})^2}\,d\zeta
=0\;
$$
for every $t>0,\;r\in B,\;x\in\Bbb R$ and $T>0$. On the other hand
\begin{align*}
\frac1{t\sqrt{\pi}}\int_{\gamma_r(T)}e^{-(\frac{\zeta-x}{t})^2}\,d\zeta
=\frac1{t\sqrt{\pi}}
&\left(\int_{A(T)}e^{-(\frac{u-x}{t})^2}\,du
+\int_{\rho^+_r(T)}e^{-(\frac{\zeta-x}{t})^2}\,d\zeta\right.\\
&\left.-\int_{S_r(T)}e^{-(\frac{\zeta-x}{t})^2}\,d\zeta
-\int_{\rho^-_r(T)}e^{-(\frac{\zeta-x}{t})^2}\,d\zeta
\right)\;.
\end{align*}
Passing to the limit as $T\to+\infty$, the vertical contributions vanish (as $h$ is bounded), while
\begin{align*}
\frac1{t\sqrt{\pi}}\int_{A(T)}e^{-(\frac{u-x}{t})^2}\,du\longrightarrow
\frac1{t\sqrt\pi}\int_{\Bbb R}e^{-(\frac{u-x}{t})^2}\,du=1
\end{align*}
and
\begin{align*}
\frac1{t\sqrt{\pi}}\int_{S_r(T)}e^{-(\frac{\zeta-x}{t})^2}\,d\zeta
\longrightarrow
\frac1{t\sqrt\pi}\int_{S_r}e^{-(\frac{\zeta-x}{t})^2}\,d\zeta=F(x)
\end{align*}
for every $t>0$, $r\in B$ and $x\in\Bbb R$. This implies that the entire function $F$ is identically $1$ on the real line for every $t>0$ and $r\in B$, so by the identity principle it is constantly $1$ on the whole $\Bbb C$; in particular (\ref{thirdpaper:gaussintegral}) holds true.

We gathered all the ingredients to prove (\ref{thirdpaper:manneapprox}). Let $\epsilon>0$, let $\eta>0$ such that $|\alpha(r,\zeta)-\alpha(r,z)|<\epsilon$ for all $z\in\Gamma_r$, $\zeta\in S_r^{(1)}$, for all $r\in B$.  Then
\begin{align*}
\left|H_t(r,z)-\alpha(r,z)\right|
=&\left|\int_{S_r^{(1)}}\alpha(r,\zeta)K_t(\zeta,z)\,d\zeta+\int_{S_r^{(2)}}\alpha(r,\zeta)K_t(\zeta,z)\,d\zeta-\alpha(r,z)\right|\nonumber\\
=&\left|\int_{S_r^{(1)}}\alpha(r,\zeta)K_t(\zeta,z)\,d\zeta+\right.\\
&\hspace{1cm}\left.\int_{S_r^{(2)}}\alpha(r,\zeta)K_t(\zeta,z)\,d\zeta-\alpha(r,z)\int_{S_r}K_t(\zeta,z)\,d\zeta\right|\nonumber\\
\le&\left|\int_{S_r^{(1)}}(\alpha(r,\zeta)-\alpha(r,z))K_t(\zeta,z)\,d\zeta\right|+\\
&\hspace{1cm}\left|\int_{S_r^{(2)}}(\alpha(r,\zeta)-\alpha(r,z))K_t(\zeta,z)\,d\zeta\right|\\
\le&\epsilon\int_{S_r^{(1)}}|K_t(\zeta,z)|\,d|\zeta|+\\
&\hspace{1cm}\left(\|\alpha(r,\cdot)\|_{S_r^{(2)}}+|\alpha(r,z)|\right)\int_{S_r^{(2)}}|K_t(\zeta,z)|\,d|\zeta|\\
\le&\frac{\epsilon}{\sqrt{1-l^2}}+2\epsilon\|\alpha(r,\cdot)\|_{S_r}\nonumber\;,
\end{align*}
where the second equality follows from (\ref{thirdpaper:gaussintegral}) and the last inequality follows from (\ref{thirdpaper:gamma1}) and (\ref{thirdpaper:gamma2}).
So we can conclude, since this last quantity can be taken arbitrarily small for $\epsilon$ small, uniformly in $z\in\Gamma_r$ and $r\in B$, for all $0<t<t_0$, where $t_0$ comes from (\ref{thirdpaper:gamma2}).
\end{proof}

The following Lemma shows how to modify the approximation constructed in Lemma \ref{thirdpaper:manne}, so that, besides approximating the given smooth function, it becomes arbitrarily small on a suitable region. The price to pay is that the approximation obtained this way is no more entire; we will get "entireness" back with Lemma \ref{thirdpaper:runge}, that is a parametric version of Runge's Theorem (see e.g., \cite{thirdpaper:FornaessForstnericWold}).

\begin{lem}\label{thirdpaper:appr2}
Assume $n=1$, and let $\alpha$ be as in Proposition \ref{thirdpaper:cousin}.
Then for every $\epsilon>0$ there exists $\{\xi_t\}_{t>0}\subset\mathcal P(B,\Omega(\delta))$ such that
\begin{equation}\label{thirdpaper:cousapprox}
\|\alpha(r,\cdot)-\xi_t(r,\cdot)\|_{\Gamma_r\cap \overline\triangle_{b}}<\epsilon
\end{equation}
for every $r\in B$, $0<t<t_0$, and 
\begin{equation}\label{thirdpaper:cousvanish}
\xi_t\to0\;\;\mbox{as}\;\;t\to0
\end{equation}
uniformly on $B\times \omega_2(\delta)$, for some $\delta>0$.
\end{lem}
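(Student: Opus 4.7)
The strategy is to modify the entire approximation $H_t$ from Lemma \ref{thirdpaper:manne} so as to kill its contribution on $\omega_2\setminus\omega_1$, without disturbing its approximation property on $\Gamma_r\cap\overline{\triangle}_b$. Since Lemma \ref{thirdpaper:manne} guarantees that $H_t\to 0$ uniformly on $(\omega_1\cap\omega_2)(\delta)$, and $\supp\alpha(r,\cdot)\subset \omega_1\setminus\omega_2$, the natural device is to insert a smooth cutoff $\chi$ whose transition is confined to the narrow "bridge" $(\omega_1\cap\omega_2)(\delta)$, and then correct the resulting non-holomorphic function by solving a parametric $\bar\partial$-problem.

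Concretely, after enlarging $R$ if necessary so that $\omega_1\subset\overline{S_j}$ (whence $\omega_1\cap A_{j,b}=\emptyset$ and the only part of $\omega_2$ meeting $\omega_1$ is the strip $\{0<\Re z<j+2,\ |\Im z|<C\}$), one has
$$
\omega_1\cap\omega_2=\{z\in\C:j+1<\Re z<j+2,\ |z|<b,\ |\Im z|<C\},
$$
and the sets $\omega_1\setminus\omega_2$ and $\omega_2\setminus\omega_1$ are separated in $\Omega(\delta)$ by this overlap strip together with a thin gap above $|\Im z|=C$ where, for $\delta$ small, $\Omega(\delta)$ splits into two sheets. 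I would construct $\chi\in\mathscr C^\infty(\Omega(\delta))$ with $0\le\chi\le 1$, $\chi\equiv 0$ near $\omega_1\setminus\omega_2$, $\chi\equiv 1$ near $\omega_2\setminus\omega_1$, and $\supp\bar\partial\chi\subset (\omega_1\cap\omega_2)(\delta/2)$; a natural choice is $\chi(z)=\rho(\Re z)$ on the "lower" sheet of $\Omega(\delta)$ contained in $\{|\Im z|<C+\delta\}$, with $\rho\in\mathscr C^\infty(\R,[0,1])$ equal to $1$ for $x\le j+1+\eta$ and $0$ for $x\ge j+2-\eta$, together with $\chi\equiv 1$ on the "upper" sheet lying inside $A_{j,b}$. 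Then
$$
\tilde\xi_t(r,z):=(1-\chi(z))H_t(r,z)
$$
is smooth in $z$, continuous in $r$, and satisfies $\bar\partial_z\tilde\xi_t=-\bar\partial\chi\cdot H_t$, a datum whose $L^\infty$-norm on $\Omega(\delta)$ tends to $0$ as $t\to 0$ uniformly in $r$ by Lemma \ref{thirdpaper:manne} and the support condition on $\bar\partial\chi$.

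I solve the parametric Cauchy--Riemann equation via the Cauchy--Green kernel,
$$
u(r,z):=-\frac{1}{\pi}\int_{\C}\frac{\bar\partial\chi(\zeta)\cdot H_t(r,\zeta)}{\zeta-z}\,dA(\zeta),
$$
well-defined since $\bar\partial\chi$ is compactly supported in the bounded set $\omega_1\cap\omega_2$. Then $u\in\mathscr C(B\times\C)$ (continuity in $r$ by dominated convergence from the $r$-continuity of $H_t$), $\bar\partial_z u=-\bar\partial\chi\cdot H_t$, and the standard bound $\|u\|_{L^\infty(\C)}\le M\,\|\bar\partial\chi\cdot H_t\|_{L^\infty}$, with $M$ depending only on the diameter of $\supp\bar\partial\chi$, forces $u\to 0$ uniformly on $B\times\C$ as $t\to 0$. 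Setting $\xi_t:=\tilde\xi_t-u$, one gets $\bar\partial\xi_t=0$ on $\Omega(\delta)$, so $\xi_t\in\mathcal P(B,\Omega(\delta))$.

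The verification then proceeds by region. On $\Gamma_r\cap\overline{\triangle}_b$ with $\Re z\ge j+2-\eta$ (covering $\supp\alpha$ and lying in $\omega_1\setminus\omega_2$), $\chi\equiv 0$, so $\xi_t=H_t-u$ and $|\xi_t-\alpha|\le|H_t-\alpha|+|u|<\epsilon$; on $\Gamma_r\cap\overline{\triangle}_b$ with $\Re z\le j+1+\eta$ (in $\omega_2\setminus\omega_1$, where $\alpha\equiv 0$), $\chi\equiv 1$ and $\xi_t=-u$ is small; on the transition portion $\Re z\in(j+1+\eta,j+2-\eta)$ of $\Gamma_r$ (in $(\omega_1\cap\omega_2)(\delta)$, where again $\alpha\equiv 0$), both $(1-\chi)H_t$ and $u$ are small. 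On $\omega_2(\delta)$: on $\omega_2\setminus\omega_1$, $\chi=1$ and $\xi_t=-u$; on $(\omega_1\cap\omega_2)(\delta)$, $(1-\chi)H_t$ is small by Lemma \ref{thirdpaper:manne} and $u$ is small. The principal obstacle is the geometric construction of $\chi$ with $\supp\bar\partial\chi\subset(\omega_1\cap\omega_2)(\delta)$, which rests on the separation of $\omega_1\setminus\omega_2$ from $\omega_2\setminus\omega_1$ by the overlap strip; once secured via $\omega_1\subset\overline{S_j}$ for $R$ large, the remaining $\bar\partial$-estimation is routine.
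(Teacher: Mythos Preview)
Your argument is correct and is essentially the paper's own proof, only packaged differently. The paper sets up a partition of unity $\phi_1+\phi_2\equiv 1$ on $\Omega(\delta)$ with $\supp\phi_i\subset\omega_i(\delta)$, defines $g_{t,1}=-H_t\phi_2$, $g_{t,2}=H_t\phi_1$, solves $\bar\partial u_t=\partial_{\bar z}g_{t,i}$ on $\Omega(\delta)$, and glues $\Theta_{t,i}=g_{t,i}-u_t$; your cutoff $\chi$ is their $\phi_2$, your $\tilde\xi_t=(1-\chi)H_t$ is their $g_{t,2}$, and your $\xi_t$ coincides with their $\Theta_{t,2}$ (and with $H_t+\Theta_{t,1}$ on $\omega_1(\delta)$). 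One small remark: the inclusion $\omega_1\subset S_j$ already holds for every $j\ge C$ without enlarging $R$, so the separation of $\omega_1\setminus\omega_2$ from $A_{j,b}$ is automatic; and your choice to run the Cauchy--Green integral over all of $\C$ with the compactly supported datum $\bar\partial\chi\cdot H_t$ is a mild simplification over the paper, which integrates over $\Omega(\delta)$ and invokes a boundary regularity theorem.
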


\begin{proof}
Let $H_t$ be the map defined in Lemma \ref{thirdpaper:manne} and $\phi_i\colon \Omega(\delta)\to[0,1]$ be smooth, such that
\begin{enumerate}[label=(\roman*)]
\item \label{thirdpaper:supp} $\supp\phi_i\subset\omega_i(\delta)$, and
\item $\phi_1+\phi_2\equiv1$ on $\Omega(\delta)$
\end{enumerate}
for some $\delta>0$. Define
$$
g_{t,1}(r,z):=-H_t(r,z)\phi_2(z),\;\;g_{t,2}(r,z):=H_t(r,z)\phi_1(z)\;
$$
on $B\times\Omega(\delta)$; then $g_{t,i}(r,\cdot)$ is a smooth function on $\Omega(\delta)$ and
\begin{align}
g_{t,2}-g_{t,1}=H_t\;
\end{align}
holds true on $B\times\Omega(\delta)$, therefore $\frac{\partial g_{t,1}}{\partial\overline z}(r,z)$ and $\frac{\partial g_{t,2}}{\partial\overline z}(r,z)$ are the same function; call it $v_t$ and consider $v_t(r,\cdot)$ smoothly extended on $\overline{\Omega(\delta)}$. Hence, defining
$$
u_t(r,z):=\frac1{2\pi i}\iint_{\Omega(\delta)}\frac{v_{t}(r,\zeta)}{\zeta-z}\;d\zeta\wedge d\overline\zeta\;,
$$
we can assume without loss of generality to have smoothed the corners of $\omega_i$ so that $\Omega(\delta)$ is smoothly bounded, hence we are allowed to apply Theorem 2.2 in \cite{thirdpaper:B15}, which ensures that $u_t(r,\cdot)$ is smooth on $\overline{\Omega(\delta)}$ for every $r\in B$ and solves $\frac{\partial u_t}{\partial\overline z}=v_t$ on $B\times\overline{\Omega(\delta)}$, hence
\begin{equation}\label{thirdpaper:theta}
\Theta_{t,i}:=g_{t,i}-u_{t}\in\mathcal P(B,\Omega(\delta))\;,\;\;i=1,2\;.
\end{equation}
Then (\ref{thirdpaper:small}) and \ref{thirdpaper:supp} imply
\begin{itemize}
\item $g_{t,i}\to0$ uniformly on $P\times \omega_i(\delta)$ as $t\to0$\;,
\item $\frac{\partial g_{t,1}}{\partial\overline z}(r,z)=-H_t(r,z)\frac{\phi_2}{\partial\overline z}(z)\to0$ uniformly on $B\times \omega_1(\delta)$ as $t\to0$\;, and
\item $\frac{\partial g_{t,2}}{\partial\overline z}(r,z)=H_t(r,z)\frac{\phi_1}{\partial\overline z}(z)\to0$ uniformly on $B\times \omega_2(\delta)$ as $t\to0$\;.
\end{itemize}
The last two imply $u_t\to0$ uniformly on $B\times\Omega(\delta)$, hence 
\begin{equation}\label{thirdpaper:theta2}
\Theta_{t,i}\to0
\end{equation}
uniformly on $P\times\omega_i(\delta)$ as $t\to0$.
Since $\Theta_{t,2}-\Theta_{t,1}=H_t$ on $B\times\Omega(\delta)$, it follows from (\ref{thirdpaper:manneapprox}), (\ref{thirdpaper:theta}) and (\ref{thirdpaper:theta2}) that 
\begin{align*}
\xi_t:=
\left\{
\begin{array}{cc}
H_t+\Theta_{t,1}\;\;&B\times\omega_1(\delta)\\
\Theta_{t,2}\;\;&B\times\omega_2(\delta)
\end{array}
\right.
\end{align*}
satisfies the stated properties.\end{proof}

\begin{lem}[Runge--type Theorem with parameters]\label{thirdpaper:runge}
With the notation of the previous lemma, for all $\epsilon>0$, there is $Q_t\in\mathscr C(B)[z]$ (polynomial with coefficients in $\mathscr C(B)$; in particular $\{Q_t\}_{t\ge0}\subset\mathcal P(B,\Bbb C)$) such that
$$
\|\xi_t-Q_t\|_{B\times \overline\Omega}<\epsilon
$$
for all $t>0, r\in B$.
\end{lem}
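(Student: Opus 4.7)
The plan is to combine the classical Runge theorem for each fixed $r$ with a partition of unity argument on the compact parameter space $B$. Since $\xi_t(r,\cdot)\in\mathcal O(\Omega(\delta))$ for every $r\in B$ and $\overline\Omega$ is compactly contained in $\Omega(\delta)$, it makes sense to hope for polynomial approximation on $\overline\Omega$. For this to work with polynomials (rather than merely rational functions), one needs $\mathbb C\setminus\overline\Omega$ to be connected; inspecting the definitions of $\omega_1,\omega_2$, one checks that $\Omega$ is essentially the disc $\triangle_b$ with the two outer pieces of the wedge $\overline S_j\cap\{|y|\ge C\}$ removed, and since $b$ is chosen so that these pieces meet $\mathbb C\setminus\triangle_b$, the complement is indeed connected. (As observed in the previous lemma, we may also smooth the corners of $\Omega$ without loss of generality; this does not affect this topological observation.)

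With the classical Runge theorem available, the next step is the parametric upgrade. Fix $t>0$ and $\epsilon>0$. For each $r_0\in B$, use the uniform continuity of $\xi_t$ on the compact set $\{r_0\}\times\overline\Omega$ to find an open neighborhood $U_{r_0}\subset B$ of $r_0$ such that
$$
|\xi_t(r,z)-\xi_t(r_0,z)|<\epsilon/2 \quad\text{for all } r\in U_{r_0},\ z\in\overline\Omega.
$$
Then apply the classical Runge theorem to $\xi_t(r_0,\cdot)\in\mathcal O(\Omega(\delta))$ to produce a polynomial $p_{r_0}\in\mathbb C[z]$ with $\|p_{r_0}-\xi_t(r_0,\cdot)\|_{\overline\Omega}<\epsilon/2$, so that by the triangle inequality
$$
|p_{r_0}(z)-\xi_t(r,z)|<\epsilon \quad\text{for all } r\in U_{r_0},\ z\in\overline\Omega.
$$

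By compactness of $B$, extract a finite subcover $U_{r_1},\dots,U_{r_N}$ and choose a continuous partition of unity $\{\phi_i\}_{i=1}^N\subset\mathscr C(B)$ subordinate to it. Define
$$
Q_t(r,z):=\sum_{i=1}^N \phi_i(r)\,p_{r_i}(z).
$$
Since each $p_{r_i}$ is a polynomial in $z$ and each $\phi_i\in\mathscr C(B)$, we have $Q_t\in\mathscr C(B)[z]\subset\mathcal P(B,\mathbb C)$. For any $(r,z)\in B\times\overline\Omega$, summing only over those $i$ with $\phi_i(r)>0$ (for which automatically $r\in U_{r_i}$) gives
$$
|Q_t(r,z)-\xi_t(r,z)|\le\sum_{i=1}^N\phi_i(r)\,|p_{r_i}(z)-\xi_t(r,z)|<\epsilon\sum_{i=1}^N\phi_i(r)=\epsilon,
$$
which is the required estimate.

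The only genuine subtlety I expect is the verification that $\mathbb C\setminus\overline\Omega$ is connected, so that polynomials (and not merely rationals with prescribed poles in finitely many holes) suffice; everything else is the standard fiberwise-plus-partition-of-unity routine. Once that topological point is in hand, the rest of the argument is automatic and the construction is uniform in $t$, so the same recipe yields the family $\{Q_t\}_{t>0}$.
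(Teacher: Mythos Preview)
Your proof is correct and takes a genuinely different route from the paper's. Both arguments begin with the observation that $\overline\Omega$ is polynomially convex (equivalently, $\mathbb C\setminus\overline\Omega$ is connected, which your description of $\Omega$ as $\triangle_b$ minus two wedge-pieces reaching out to $\partial\triangle_b$ correctly captures). From there they diverge: the paper represents $\xi_t(r,\cdot)$ by its Cauchy integral over $\gamma=\partial\Omega(\delta/2)$, discretizes the integral into a Riemann sum $\beta_t(r,z)=\sum_j \frac{\gamma_j(1)-\gamma_j(0)}{2\pi i}\,\frac{\xi_t(r,\zeta_j)}{\zeta_j-z}$, and then approximates each term $1/(\zeta_j-z)$ by a polynomial on $\overline\Omega$. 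You instead apply classical Runge fiberwise at finitely many parameters $r_i$ and glue with a continuous partition of unity on $B$.

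The paper's approach is more explicit: the coefficients of $\beta_t$ are point evaluations $\xi_t(r,\zeta_j)$, so continuity in $r$ is automatic and no partition of unity is needed; this also makes it transparent that $\beta_t$ inherits smallness from $\xi_t$ (useful for Proposition~\ref{thirdpaper:cousin}, where one needs $Q_t\to 0$ on $\omega_2(\delta)$ as $t\to 0$). Your approach is more modular, treating Runge as a black box and reducing everything to compactness of $B$; it would work unchanged for any continuous family of holomorphic functions on a neighborhood of a polynomially convex compact. One minor remark: your closing phrase ``the construction is uniform in $t$'' is slightly misleading, since your neighborhoods $U_{r_0}$ and polynomials $p_{r_i}$ do depend on $t$; what you mean (and what suffices) is that the recipe applies for each fixed $t$ separately.
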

\begin{proof}
Observe that $\overline\Omega$ is compact polynomially convex. Let $\gamma=\partial\Omega(\frac{\delta}{2})$. 
We have that
$$
(r,\zeta,z)\mapsto\frac{\xi_t(r,\zeta)}{\zeta-z}
$$
is uniformly continuous on $B\times \gamma\times \overline\Omega$, hence for every $\epsilon>0$ there exists $\eta>0$, such that, dividing $\gamma$ into $N$ pieces $\gamma_1,\dots,\gamma_N$ whose length $L(\gamma_j)$ is less than $\eta$ and fixing a point $\zeta_j\in\gamma_j$ for every $j$, 
\begin{align*}
\left|\frac{\xi_t(r,\zeta)}{\zeta-z}-\frac{\xi_t(r,\zeta_j)}{\zeta_j-z}\right|<\frac1N\frac{2\pi}{L(\gamma_j)}\epsilon
\end{align*}
holds $\forall (r,\zeta,z)\in B\times \gamma_j\times \overline\Omega$. Calling $\gamma_j(1),\;\gamma_j(0)$ the final and initial points of $\gamma_j$, for every $(r,z)\in B\times \overline\Omega$ one has
\begin{align*}
\xi_t(r,z)-\overbrace{\sum_{j=1}^N\frac{\gamma_j(1)-\gamma_j(0)}{2\pi i}\frac{\xi_t(r,\zeta_j)}{\zeta_j-z}}^{=:\beta_t(r,z)}
&=\frac1{2\pi i}\int_{\gamma}\frac{\xi_t(r,\zeta)}{\zeta-z}\,d\zeta-\sum_{j=1}^N\frac{1}{2\pi i}\int_{\gamma_j}\frac{\xi_t(r,\zeta_j)}{\zeta_j-z}\,d\zeta\\
&=\frac1{2\pi i}\sum_{j=1}^N\int_{\gamma_j}\left(\frac{\xi_t(r,\zeta)}{\zeta-z}-\frac{\xi_t(r,\zeta_j)}{\zeta_j-z}\right)\,d\zeta
\end{align*}
thus
\begin{align}\label{thirdpaper:one}
\|\xi_t-\beta_t\|_{B\times \overline\Omega}<\epsilon
\end{align}
for every $t>0$.  
The result now follows since each rational function $z\mapsto\frac{1}{\zeta_j-z}$ may be approximated arbitrarily 
well on $\overline{\Omega}$ by polynomials.  
\end{proof}

\section{Anders\'{e}n--Lempert Theory}\label{thirdpaper:sec:ALtheory}

We will now apply Anders\'{e}n--Lempert Theory in $B\times\mathbb C^2$. We have that $B\subset\mathbb R^N$, 
and when talking about analytic properties of sets and functions on $B\times\mathbb C^2$
we will think of $B\times\mathbb C^2\subset\mathbb C^N\times\mathbb C^2$, $\mathbb C^N=\mathbb R^N+i\mathbb R^N$. 
For instance, by saying that $K\subset B\times\mathbb C^2$ is polynomially convex compact we mean polynomially 
convex in $\mathbb C^N\times\mathbb C^2$; this is, in fact, equivalent to $K_r$ being polynomially convex 
in $\{r\}\times\mathbb C^2$ for each $r\in B$. 


With the setup introduced in Section \ref{thirdpaper:sec:setup} we now set $ s_{r,j}:=\phi_r(\lambda_{r,j})$ and we set
$$
S_r:=\bigcup_{j=1}^n s_{r,j}\;.
$$
In the product space 
$B\times\mathbb C^2$ we define $S:=\{(r,(z,w)):(z,w)\in S_r\;,\;r\in B\}$.
\begin{prop}\label{thirdpaper:prop}
Let $K\subset (B\times\mathbb C^2)\setminus S$ be a compact set such that $K$ is 
polynomially convex. 
Let $T>0$, and let $\epsilon>0$. 
Then there exists a continuous map $g\colon B\times\mathbb C^2\rightarrow\mathbb C^2$
such that the following hold for all $r\in B$.
\begin{itemize}
\item[(i)] $g(r,\cdot)\in \Aut \mathbb C^2$,
\item[(ii)] $\|g(r,\cdot)-\id\|_{K_r}<\epsilon$, and
\item[(iii)] $g(r,S_r)\subset\mathbb C^2\setminus T\mathbb B^2$\;.
\end{itemize}
\end{prop}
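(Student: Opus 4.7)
\emph{Plan.} I would use the parametric Anders\'{e}n--Lempert technique combined with the parametric Carleman approximation from Theorem~\ref{thirdpaper:Carleman}, constructing $g$ fiber-wise as a composition of two shears conjugated by the linear change of coordinates $A_{a,b}$ from Proposition~\ref{thirdpaper:t-tang}; this is the standard Forstneri\v{c}--Wold push-out strategy performed continuously in the parameter~$r$.

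First, I would apply the rotation $A_{a,b}$ with $a,b$ close to $1/\sqrt{2}$. By Proposition~\ref{thirdpaper:t-tang}, the first-coordinate projections $\Gamma_{j,r} := \pi_1\bigl(A_{a,b}(s_{r,j})\bigr)$ form continuous families of directed curves in pairwise distinct angular directions, while $A_{a,b}(K)$ is still polynomially convex. Fix $\rho>0$ large enough that $\pi_1(A_{a,b}(K_r))\subset\overline\triangle_\rho$ for every $r\in B$, and enlarge the parameter $R$ of the directed families so that $\overline\triangle_{\rho+3+3C/2}\cap\Gamma_r=\emptyset$ and $R-1>T$. Define a continuous function $F\in\mathcal P(B,\mathbb C,\overline\triangle_{\rho+3+3C/2})$ which is identically $0$ on that large disk and which on each directed tail $\Gamma_{j,r}$ takes values making the sum $w+F(r,z)$ coincide with a pairwise distinct constant $c_j$ of modulus larger than $T$ (here $w$ is the second coordinate of the point of $A_{a,b}(s_{r,j})$ lying above $z$, which is well-defined along the tail for a generic choice of $a,b$). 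By Theorem~\ref{thirdpaper:Carleman} I approximate $F$ to arbitrary precision by an entire $f_1\in\mathcal P(B,\mathbb C)$, and the shear $\sigma_1(r,z,w):=(z,\,w+f_1(r,z))$ lies in $\Aut\mathbb C^2$ for each $r$, is $\epsilon$-close to the identity on $A_{a,b}(K_r)$, and sends each directed tail of $A_{a,b}(S_r)$ into $\{|w|>T\}$.

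The main obstacle is that the bounded initial portions of the arms --- corresponding to neighbourhoods of the regular endpoints of $\lambda_{r,j}$ --- have first-coordinate projections lying inside $\overline\triangle_\rho$ together with $\pi_1(A_{a,b}(K_r))$, so $\sigma_1$ alone cannot push them out in the $w$-direction without perturbing $A_{a,b}(K)$. To handle them I would choose the constants $c_j$ pairwise well separated, so that after $\sigma_1$ the $n$ arms tend to infinity through pairwise disjoint strips in the $w$-plane, and then apply a second shear $\sigma_2(r,z,w):=(z+f_2(r,w),\,w)$ obtained by a dual parametric Carleman argument in the $w$-variable, designed to push the remaining bounded portions out of $T\mathbb B^2$ in the first coordinate while remaining close to the identity on $\sigma_1(A_{a,b}(K_r))$; alternatively, one may invoke a parametric Anders\'{e}n--Lempert approximation theorem on the polynomially convex compact set consisting of $\sigma_1(A_{a,b}(K_r))$ together with these bounded portions, the polynomial convexity of the union following from $K_r\cap S_r=\emptyset$ and the polynomial convexity of $K_r$. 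Setting $g(r,\cdot):=A_{a,b}^{-1}\circ\sigma_2(r,\cdot)\circ\sigma_1(r,\cdot)\circ A_{a,b}$ then yields a continuous family of automorphisms of $\mathbb C^2$ satisfying (i), (ii), and (iii) for sufficiently tight Carleman tolerances.
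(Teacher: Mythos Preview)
Your opening moves --- rotate by $A_{a,b}$, use Proposition~\ref{thirdpaper:t-tang} to obtain directed first-coordinate projections, and then bring in Theorem~\ref{thirdpaper:Carleman} --- match the paper's setup.  But from that point on the paper follows a different route, and the divergence exposes a real gap in your argument.

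The paper does \emph{not} attempt a direct two-shear construction.  Instead it first invokes Lemma~\ref{thirdpaper:isotopy} to produce an isotopy $\psi_{r,t}$ of injective holomorphic maps that retracts each $S_r$ into its own tail $S_r\setminus T\mathbb B^2$, is $\mathscr C^2$--close to the identity on a collar $S_r(T',T'')(\delta)$, and is set equal to the identity near $K$.  It then runs the full parametric Anders\'en--Lempert procedure on this isotopy: decompose the generating time-dependent vector field into polynomial shear and overshear fields, compose their time-$(t/n)$ flows, and --- crucially --- multiply each shear coefficient by a cutoff $\chi_j$ supported in $T''\mathbb B^2$, so that the far tails of $S_r$ are not moved at all.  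The resulting non-holomorphic coefficients are finally made entire via Theorem~\ref{thirdpaper:Carleman}, using that they vanish both on $\overline\triangle_R\supset\pi_j(K_r)$ and on the directed projections $\pi_j(\tilde\sigma(j-1)_r(S_r(T'',\infty)))$.  This cutoff-then-Carleman device, taken from \cite{thirdpaper:KW18}, is precisely what guarantees that the portion of $S_r$ already outside $T\mathbb B^2$ is not dragged back in by later shears in the composition.

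Your proposal lacks any such mechanism.  Your $\sigma_1$ does send the directed tails into $\{w\approx c_j\}$, but the second step breaks down.  For the shear $\sigma_2(r,z,w)=(z+f_2(r,w),w)$ to push the bounded initial arcs out in $z$ while remaining $\epsilon$--close to the identity on $\sigma_1(A_{a,b}(K_r))$, you need $f_2(r,\cdot)$ to be simultaneously small on $\pi_2(\sigma_1(A_{a,b}(K_r)))$ and large on the $w$--projection of those bounded arcs.  Since $\sigma_1\approx\id$ on both sets, this amounts to asking that the $w$--projections of $A_{a,b}(K_r)$ and of the finite-endpoint pieces of $A_{a,b}(S_r)$ be disjoint --- and nothing in the hypotheses provides that (you only know $K_r\cap S_r=\emptyset$, not separation after a coordinate projection).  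The ``dual parametric Carleman'' you invoke is also unavailable as stated: Theorem~\ref{thirdpaper:Carleman} requires directed curves in the approximation variable, whereas after $\sigma_1$ the $w$--projections of the arms are not directed (the tails collapse near the points $c_j$, and the intermediate pieces are uncontrolled).  Your fallback --- apply a generic parametric Anders\'en--Lempert theorem to $\sigma_1(A_{a,b}(K_r))$ together with the bounded arcs --- runs into exactly the difficulty the paper's cutoff technique is designed to handle: an automorphism produced that way may well move the already-expelled tails back into $T\mathbb B^2$, and you give no argument preventing this.
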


For the following lemma we extend the map $\psi$ defined in Section \ref{thirdpaper:sec:setup} to a map $\psi\colon\mathbb P^1\times\mathbb C \to\mathbb P^1\times\mathbb C$ by setting $\psi(z,w):=(1/z + 1,w)$, and 
we extend the map $\phi_r(z)$ to a rational map on $\mathbb C^2$ by setting 
\begin{equation}\label{thirdpaper:emb1}
\phi_r(z,w) := \left(z,w +  \sum_{j=2}^n \frac{e^{i\theta_j}}{\alpha_{r,j}(z)}\right).
\end{equation}
Moreover for $T'<T''$ we set $S_r(T',T''):=\{(z,w)\in S_r: T'\leq |(z,w)|\leq T''\}$, and we let $S(T',T''):=\bigcup_r\{r\}\times S_r(T',T'')$ which is the union over $r$ in the product space $B\times\Bbb C^2$.
Finally define $S(T',T'')(\delta):=\bigcup_r\{r\}\times S_r(T',T'')(\delta)$ for $\delta>0$.

\begin{lem}\label{thirdpaper:isotopy}
There exist $T''>T'>>T$ arbitrarily large, $\delta>0$, such that for any $\epsilon>0$ there exists
an open set $U\subset B\times\Bbb C^2$ containing $S\cup \overline{S(T',T'')(\delta)}$ and 
a smooth fiber preserving map $\psi\colon[0,1]\times U\rightarrow B\times\mathbb C^2$
such that, for each $r\in B$ the following hold:
\begin{itemize}
\item[(i)] $\psi_{r,t}(\cdot)$ is an isotopy of holomorphic embeddings, and 
$\psi_{r,0}(\cdot)=\id$,
\item[(ii)] $\psi_{r,t}(S_r)\subset S_r$ for every $t\in[0,1]$,
\item[(iii)] $\|\psi_{r,t}-\id\|_{\mathscr C^2(S_r(T',T'')(\delta))}<\epsilon$\; for every $t\in[0,1]$, and
\item[(iv)] $\psi_{r,1}(S_r)\subset\mathbb C^2\setminus T\mathbb B^2$.
\end{itemize}
\end{lem}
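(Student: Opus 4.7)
The plan is to construct $\psi_{r,t}$ by shifting each curve $s_{r,j}$ along itself inside a holomorphic tubular neighborhood provided by the shear
$$
\phi_r(z,w) := \bigl(z,\; w+q_r(z)\bigr), \qquad q_r(z) := \sum_{k=2}^{n}\frac{e^{i\theta_k}}{\alpha_{r,k}(z)}.
$$
Since $\phi_r$ is a biholomorphism of $(\mathbb C\setminus\{c_k(r)\}_{k\ge 2})\times\mathbb C$ sending $\lambda_{r,j}\times\{0\}$ onto $s_{r,j}$, the lemma reduces, fiberwise in $r$, to producing for each $j$ a holomorphic self-map $\sigma_{r,j,t}$ of a neighborhood of $\lambda_{r,j}$ in $\mathbb C$ that preserves $\lambda_{r,j}$, satisfies $\sigma_{r,j,0}=\id$, and at $t=1$ sends the bounded part of $\lambda_{r,j}$ into a prescribed neighborhood of the pole $c_j(r)$ of $q_r$, so that its $\phi_r$-image lies outside $T\mathbb B^2$.

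I would construct $\sigma_{r,j,t}$ as the time-$t$ flow of a holomorphic vector field, which is automatically an isotopy of holomorphic embeddings. Passing to the $l$-coordinate via $\psi^{-1}$, so that $l_{r,j}=[a_j(r)-b_j(r),a_j(r)]$ is a horizontal real segment, set $u:=w-a_j(r)$ and take
$$
V^{(l)}_{r,j}(w)\,\partial_w := u^2\,p_{r,j}(u)\,\partial_w,
$$
where $p_{r,j}$ is a real polynomial in $u$, depending smoothly on $r\in B$, chosen so that $p_{r,j}(u)\ge c_1 T/b_j(r)$ on the inner range $u\in[-b_j(r),-1/T]$ and $|p_{r,j}(u)|,|p'_{r,j}(u)|\le c_2\epsilon$ on the annular/outer range $u\in[-1/T',0]$, for suitable constants $c_1,c_2>0$ depending only on the compact ambient data. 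Such a polynomial exists by Weierstrass approximation of a smooth two-level profile on the compact interval $[-b_j(r),0]$, with smooth $r$-dependence obtained via Bernstein polynomials of a smooth family of profiles. Since $p_{r,j}$ is real on $\mathbb R$, the field $V^{(l)}_{r,j}$ is tangent to $l_{r,j}$ and its flow $\sigma^{(l)}_{r,j,t}$ preserves it; integrating $\dot u = u^2 p_{r,j}(u)$ from $u(0)=-b_j(r)$ shows $|u(1)|\le 1/T$ once $c_1 T$ is large enough, which is the push-out property (iv).

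Setting $\sigma_{r,j,t} := \psi\circ\sigma^{(l)}_{r,j,t}\circ\psi^{-1}$ and lifting through the shear yields
$$
\psi_{r,t}(z,w) := \bigl(\sigma_{r,j,t}(z),\; w - q_r(z) + q_r(\sigma_{r,j,t}(z))\bigr)
$$
on a thin tubular neighborhood of each $s_{r,j}$ avoiding the other poles $\{c_k(r)\}_{k\ne j}$. By Proposition \ref{thirdpaper:t-tang} and the setup's standing assumption $e^{i\theta_j}S_\rho\cap e^{i\theta_k}S_\rho=\emptyset$, the curves $s_{r,j}$ admit pairwise disjoint thin tubular neighborhoods uniformly in $r\in B$ once $R$ is large enough, so the local definitions patch into a single holomorphic $\psi_{r,t}$ on an open $U\supset S\cup\overline{S(T',T'')(\delta)}$. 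Properties (i), (ii), and (iv) are immediate; for (iii), the $u^2$ damping in $V^{(l)}_{r,j}$ is chosen precisely to cancel the $1/|z-c_j(r)|^2$-blowup of $q_r'$ when one transfers back to $\mathbb C^2$, so that a direct $C^2$-estimate shows $\|\psi_{r,t}-\id\|_{\mathscr C^2(S_r(T',T'')(\delta))}$ is bounded by the $C^2$-norm of $p_{r,j}$ on the annular range of $u$, hence by $O(\epsilon)$.

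The main obstacle is the simultaneous calibration of $p_{r,j}$: it must be of size at least $T/b_j(r)$ on $[-b_j(r),-1/T]$ to realize (iv), but at most $\epsilon$ together with its derivative on $[-1/T',0]$ to realize (iii), and these two sub-intervals are separated only by $[-1/T,-1/T']$. This is overcome by taking $T'\gg T$, so the separation is large on the $b_j(r)$-scale; Bernstein approximation of a smooth two-level profile then supplies $p_{r,j}$ with the required quantitative behavior, and the $u^2$ prefactor converts the annular smallness of $p_{r,j}$ into the ambient $C^2$-smallness via the shear.
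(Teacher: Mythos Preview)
Your argument is correct and shares the paper's core strategy: pull everything back through the shear $\phi_r$ and the M\"obius map $\psi$ so that the problem becomes constructing, for each $j$, a holomorphic isotopy of a neighborhood of the real segment $l_{r,j}$ that fixes the right endpoint, contracts the segment toward that endpoint, and is $C^2$-close to the identity on a specified sub-interval near that endpoint; then conjugate back by $F_{r,j}=\phi_r\circ\psi\circ\gamma_{r,j}$. Where you diverge is in how you build that one-dimensional isotopy. You realize it as the time-$t$ flow of $u^2 p_{r,j}(u)\,\partial_u$, with the polynomial $p_{r,j}$ manufactured via Bernstein approximation of a two-level smooth profile, separately for each $(r,j)$. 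The paper instead normalizes every segment to $[-1,0]$ via $\gamma_{r,j}$, picks a single $s\in(-1,0)$ uniform in $(r,j)$, and writes down one explicit $(r,j)$-independent isotopy
\[
\eta_{N,t}(z,w)=\Bigl(\tfrac{z-t(1+s)e^{-N(z-s)}+t(1+s)e^{Ns}}{1-t(1+s)e^{Ns}},\,w\Bigr),
\]
using its inverse isotopy $\sigma_{N,t}$. The exponential factor makes $\sigma_{N,t}\to\id$ in $C^\infty$ on every $\{\Re z\ge s'\}$, $s'>s$, as $N\to\infty$, so (iii) drops out immediately for large $N$ with no calibration and no approximation theory. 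Your route works but is heavier: you must produce $p_{r,j}$ with the right quantitative two-level behavior (and for the $C^2$ estimate you also need $|p''_{r,j}|$ small on the outer range, not just $|p_{r,j}|$ and $|p'_{r,j}|$, with constants depending on the already-fixed $T',T''$), and your ``$u^2$ cancels the $q_r'$-blowup'' remark, while correct as a heuristic, is not actually needed since (iii) is only asked on the \emph{compact} tube $S_r(T',T'')(\delta)$ where $q_r$ and its derivatives are uniformly bounded.
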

\begin{proof}
Set $\gamma_{r,j}(z,w):=(b_j(r)\cdot z + a_j(r),w)$ such that $\gamma_{r,j}[-1,0]$ parametrizes $l_{r,j}$.
Setting $F_{r,j}:=\phi_r\circ\psi\circ \gamma_{r,j}$ we have that $F_{r,j}[-1,0]$ parametrizes $s_{r,j}$.
Fix $T>0$ and choose $-1<s<0$ such that 
$$
\bigcup_{r\in B}\bigcup_{j=1}^m F_{r,j}^{-1}((T+1)\overline{\mathbb B^2}\cap s_{r,j})\subset [-1,s].
$$
Choose any pair $T',T''$ such that $F_{r,j}^{-1}(s_{r,j}(T',T''))\subset (s,0)$ for all $r,j$.  \

\medskip

For $N\in\mathbb N$ define 
$$
\eta_{N,t}(z,w) := \left(\frac{z-t(1+s)e^{-N(z-s)} + t(1+s)e^{-N(-s)}}{1-t(1+s)e^{-N(-s)}},w\right). 
$$
Then $\eta_{N,t}$
is an isotopy of injective holomorphic maps near the real line in the $z$--plane, and
leaves the real line invariant, fixing $0$.  We see that 
$$
\eta_{N,1}(x,0) = \left(\frac{x-(1+s)e^{-N(x-s)}+ (1+s)e^{-N(-s)}}{1-(1+s)e^{-N(-s)}},0\right)
$$
from which $\eta_{N,1}(s,0)=(-1,0)$ and $\lim_{x\to+\infty}\eta_{N,1}(x,0)=(+\infty,0)$, so the interval $[s,\infty)$ is stretched to the interval $[-1,\infty)$ when $t=1$. Note that 
for any $s'>s$ we have that $\lim_{N\rightarrow\infty} \eta_{N,t} = \id$ uniformly 
on $\{\mathfrak{Re}(z)\geq s'\}$.  \

Now let $\sigma_{N,t}$ be the inverse isotopy to $\eta_{N,t}$, i.e., 
$\sigma_{N,t} = \eta_{N,1-t}\circ\eta_{N,1}^{-1}$; it is injective holomorphic near the 
real line in the $z$--plane, and by choosing $N$ large, may be extended, arbitrarily 
close to the identity, to any set $\{\mathfrak{Re}(z)\geq s'\}$ for $s'>s$.  \

We may now define $\psi_{r,t}(\cdot)$ on $s_{r,j}$ by  
$$
\psi_{r,t} :=  F_{r,j} \circ \sigma_{N,t} \circ F_{r,j}^{-1}\;.
$$
The claims of the lemma are satisfied by choosing $N$ large, and $\delta$ sufficiently small. 
\end{proof}

\begin{rk}\label{thirdpaper:polycvx}
If $\delta$ is sufficiently small and $\epsilon$ further sufficiently small we get that 
$$
K\cup \psi_t(S\cup \overline{S(T',T'')(\delta)})
$$
is polynomially convex. Observe first that $K_r\cup S_r(T',T'')$ is polynomially convex, since 
$K_r$ is, and $S_r(T',T'')$ is a collection of disjoint arcs. For a sufficiently small $\delta'$ it is known that 
the tube $\overline{S_r(T',T'')(\delta')}$ is polynomially convex, and for sufficiently 
small $\delta'$ we have that $K_r\cup \overline{S_r(T',T'')(\delta')}$ is polynomially convex. 
Then if $\delta<\delta'$ and we consider $\psi_{t,r}$ as in the lemma with $\delta'$
instead of $\delta$, if $\epsilon$ is small enough we get our claim, since the $\delta,\delta'$
may be chosen independently of $r$.
\end{rk}

\emph{Proof of Proposition \ref{thirdpaper:prop}:}
Fix $0<\delta<<1$.  For each $\eta\in \delta\mathbb B^2$
we set $v_\eta=(0,1) + \eta$, and we let $\pi_\eta$ denote the 
orthogonal projection onto the orthogonal complement of $v_\eta$.
After applying the linear transformation 
$$
A(z,w)=((1/\sqrt{2})z + (1/\sqrt{2})w,-(1/\sqrt{2})z + (1/\sqrt{2})w)
$$
it follows from Proposition \ref{thirdpaper:t-tang} that the family $\pi_{\eta}(s_{r,1})$
is $(\vartheta_{1,\eta}-\pi)$--directed and that the families $\pi_{\eta}(s_{r,j})$
are $(\vartheta_{2,\eta} + \theta_j -\pi)$--directed, where the $\vartheta_{j,\eta}$'s
vary continuously with $\eta$. From now on we will assume that have applied 
the transformation $A$ without changing the notation for all sets considered above.  

\medskip

By increasing $T>0$ we may assume that $K_r\subset T\mathbb B^2$ for all $r$, and we
fix  $R$ as in Theorem \ref{thirdpaper:Carleman} such that $\pi_\eta(T\mathbb B^2)\subset\triangle_R$, and choose 
$T'<T''$ such that $\pi_\eta(S_r(T',T''))\subset \mathbb C\setminus\triangle_{R+3+\frac32C}$ for all $r$ and all $\eta$. 

\medskip

Let $\psi_t$ be the isotopy from Lemma \ref{thirdpaper:isotopy}, extended to be the identity on some neighborhood of $K$ which 
we regard as being included in $U$.
On $\psi_{t_0}(U)$ we define the vector field
$X_{t_0}(\zeta)=\frac{d}{dt}_{t=t_0}\psi_t(\psi_{t_0}^{-1}(\zeta))$ (here $\zeta=(r,x)=(r,z,w)$).  The goal is to follow 
the standard Anders\'{e}n--Lempert procedure parametrically for approximating the flow of the time dependent vector field $X_t$
by compositions of flows of complete fields, but to modify these so that they do not move $S\setminus S(0,T'')$. 
The proof is the same as the corresponding proof in \cite{thirdpaper:KW18} where this was done without parameters, but 
we include here a sketch and some additional details. The reader is assumed to be familiar with the Anders\'{e}n--Lempert--Forstneri\v{c}--Rosay construction. 

\medskip

\emph{Step 1:}  We will find flows $\sigma_{r,j}(t,x), j=1,\dots,m$, such that the composition 
$$
\sigma_{r,m}\circ\cdots\circ\sigma_{r,1}(t,x)
$$
approximates $\psi_t$. The flows are of two forms:
\begin{equation}\label{thirdpaper:shear}
\sigma_{r,j}(t,x) = x +  t a_{r,j}(\pi_j(x))v_j
\end{equation}
or
\begin{equation}\label{thirdpaper:overshear}
\sigma_{r,j}(t,x)= x + (e^{ t a_{r,j}(\pi_j(x))} - 1)\langle x,v_j\rangle v_j. 
\end{equation}
We write $\sigma_{r,j}(t,x) = x +  b_{r,j}(t,x)v_j$.

\medskip

\emph{Step 2:} The plan is then roughly to find a family of cutoff functions $\chi_j\in \mathscr C^\infty(\mathbb C^2),\; 0\leq\chi_j\leq 1$, 
such that $\chi_j\equiv 1$ near $T'\overline{\mathbb B^2}$ and $\chi_j\equiv 0$ near $\mathbb C^2\setminus {T''\mathbb B^2}$, 
and define 
$$
\tilde\sigma_{r,j}(t,x) := x +  \chi_j(x)b_{r,j}(t,x)v_j, 
$$
in such a way that all compositions 
$$
\tilde\sigma(j)_r:=\tilde\sigma_{r,j}\circ\cdots\circ \tilde\sigma_{r,1}
$$
are as close to the identity as we like in $\mathscr C^1$--norm on $S_r(T',T'')(\delta/2)$.  Note that $\tilde\sigma_{r,j}=\sigma_{r,j}$
on $T'\overline{\mathbb B^2}$ and $\tilde\sigma_{r,j}=\id$ outside $T''\mathbb B^2$. 
In particular the families $\pi_j(\tilde\sigma(j)_r(S_r))$ are as close as we like 
to the original families $\pi_j(S_r)$ and identical outside some compact set. 

\medskip

\emph{Step 3:} For each $j$ we may rewrite $\tilde\sigma_{r,j}$ on $\tilde\sigma(j-1)_r(S_r)$ as
$$
\tilde\sigma_{r,j}(t,x) = x +  c_{r,j}(t,\pi_j(x))v_j \mbox{ or } \tilde\sigma_{r,j}(t,x) = x + (e^{ c_{r,j}(t,\pi_j(x))} - 1)\langle x,v_j\rangle v_j. 
$$
where the $c_{r,j}$'s extend to be holomorphic  near $\overline\triangle_{R+3+\frac32C}$ and zero on $\pi_j(\tilde\sigma(j-1)_r(S_r(T'',\infty)))$.

\medskip

\emph{Step 4:} Approximate the coefficients $c_{r,j}$ in the sense of Carleman using Theorem \ref{thirdpaper:Carleman}.

\medskip

We now include some estimates explaining why the above scheme works (see also \cite{thirdpaper:KW18} where 
the construction is done without dependence of parameters).

\medskip
Choose $\chi\in \mathscr C^{\infty}(\mathbb C^2)$ nonnegative such that $\chi\equiv 1$ near $T'\overline{\mathbb B^{2}}$
and $\chi\equiv 0$ near $\mathbb C^2\setminus T''\mathbb B^2$. \

We may assume that the vector fields $X_{r,t}$ satisfy $\|X_{r,t}\|<\alpha$ on $S(T',T'')(\delta)$ for any small $\alpha>0$. 
Thus, freezing the vector field at time $i/N$ to obtain a vector 
field $X^i_{}$ with a flow $\gamma^i_{r,t}$, we have that $\|\gamma^i_{r}(t/N,x)-x\|\leq (t/N)\alpha$
on $S_r(T',T'')(2\delta/3)$. Thus, we may assume that the compositions 
$$
\gamma^i_{r,t/N}\circ\cdots\circ\gamma^1_{r,t/N}
$$
exist and remain arbitrarily close to the identity on $S_r(T',T'')(\delta/2)$ for $i<N$. \

\medskip

By Remark \ref{thirdpaper:polycvx}  we may approximate each vector field $X^i_r$ to arbitrary precision by a polynomial vector field, which 
we will still denote by $X^i_r$. 
By the parametric Anders\'{e}n--Lempert observation, see Lemma 4.9.9 in \cite{thirdpaper:F17} and the proof of Theorem 2.3 in \cite{thirdpaper:K05}, the family of vector fields $X^i_{r}$ may be written as a sum of shear and over--shear vector 
fields $X^i_{r} = \sum_{j=1}^m Y^i_{r,j}$ with $Y^i_{r,j}(x)=g^i_{r,j}(x)v_j$ with $v_j=v_{\eta_j}$,  
with flows 
$$
\sigma^i_{r,j}(t,x) = x + b^i_{r,j}(t,x) v_j. 
$$
Write 
$$
\Theta^i_{r,t}(x) := \sigma^i_{r,m}(t,x)\circ\cdots  \circ \sigma^i_{r,1}(t,x) =: x + f^i_r(t,x).
$$
It is known that the composition 
\begin{equation}\label{thirdpaper:comp}
(\Theta^N_{r,t/nN})^n\circ \cdots \circ (\Theta^1_{r,t/nN})^n
\end{equation}
converges to the flow of $X_{r,t}$ on $K_r\cup S_r$ as $N$ and then $n$ tends to infinity.
Our first goal is to replace the maps $\sigma^i_{r,j}(t,x)$ by maps $x + \chi_j(x)b^i_{r,j}(t,x) v_j$
with $\chi_j(x)=1$ for $|x|\leq T'$ and $\chi_j(x)=0$ for $|x|\geq T''$ in the composition 
\eqref{thirdpaper:comp} or partial compositions of it, and show that we still get maps that are 
close to the identity on $S_r(T',T'')(\delta/2)$. The compositions thus obtained will remain 
the same on $\{|x|<T'\}$ and be the identity map outside $\{|x|\leq T''\}$.

\medskip

We will now modify the flows on $S_r(T',T'')(\delta)$.  We have that $\|f^i_r(t,x)\|\leq 2\alpha t$
for $t$ sufficiently small. If we set 
$$
\tilde\Theta^i_r(t,x) = x + \chi(x) f^i_r(t,x)
$$
we see that 
$$
\| (\tilde\Theta^i_{r,(t/n)})^m - \id\| \leq 2\alpha (m/n)t
$$
for $n$ large.   \

We decompose in a natural way 
$$
\sigma(j)_r(t,x):=\sigma_{r,j}\circ\cdots\circ\sigma_{r,1}(t,x) = x + h_{r,1}(t,x)v_1 + \cdots + h_{r,j}(t,x)v_j;
$$
then the $h_{r,j}(t,\cdot)$ go to zero as $t\rightarrow 0$. Now for large $n$ define
$$
\tilde\sigma_{r,1}(t/n,x)=x + \chi_1(x)h_{r,1}(t/n,x)v_1\;,
$$ 
and by induction 
$$
\tilde\sigma_{r,j+1}(t/n,x) = x + \chi_{j+1}(\tilde\sigma(j)_r^{-1}(t/n,x))h_{r,j+1}(\tilde\sigma(j)_r^{-1}(t/n,x))v_{j+1}.
$$

Then 
$$
\tilde\sigma^i_{r,m,t/n}\circ\cdots \circ \tilde\sigma^i_{r,1,t/n}(x) = \tilde\Theta^i_r(t/n,x),
$$
and we get that 
$$
\|(\tilde\Theta^N_{r,t/Nn})^n\circ\cdots\circ(\tilde\Theta^1_{r,t/Nn})^n-\id \|\leq 2\alpha t,
$$
and corresponding estimates hold for partial compositions.  Note that this shows that 
$S_r(T',T'')(\delta/2)$ remains in $S_r(T',T'')(2\delta/3)$ where we may assume that the 
$\mathscr C^1$--norms of the $f^i_r$'s are arbitrarily small, and so by arguments similar to those above
we get that all partial compositions are close to the identity in $\mathscr C^1$--norm, which will allow us to use the implicit 
function theorem to rewrite as in Step 3 above. \

\medskip

Finally, Step 4 is carried out exactly as in \cite{thirdpaper:KW18}.

$\hfill\blacksquare$

\section{Proof of Theorem \ref{thirdpaper:thm:main}}\label{thirdpaper:sec:main} 

\begin{proof}[Proof of Theorem \ref{thirdpaper:thm:main}]

Recall that $\psi\colon\mathbb P^1\rightarrow\mathbb P^1$ was originally defined by 
$\psi(z)=\frac{1}{z}+1$ and $\psi(\Bbb P^1\setminus L_r)=\Bbb P^1\setminus\Lambda_r$.
Then, if $0<\theta_2<\theta_3<\cdots<\theta_n$, 
$\phi_r\colon\Bbb C\setminus\{c_2(r),\dots,c_n(r)\}\rightarrow\mathbb C^2$ was defined by 
$$
\phi_r(z) = \left(z,  \sum_{j=2}^n \frac{e^{i\theta_j}}{\alpha_{r,j}(z)}\right).
$$
Let $S_r$ and $S$ be the sets defined in Section \ref{thirdpaper:sec:ALtheory} and set $X_r:=\phi_r\circ\psi(\mathbb P^1\setminus L_r)=\phi_r(\Bbb P^1)\setminus S_r$ which is a 1--dimensional complex manifold with boundary $\partial X_r=S_r$.
Define 
$$
C^r_j:=\mathbb P^1\setminus L_r(1/j), 
$$
such that $\{C_j^r\}_{j=1}^\infty$ is a normal exhaustion of $\mathbb P^1\setminus L_r$ by $\mathcal O(\Bbb P^1\setminus L_r)$--convex compact sets. It follows that $K^r_j:=\phi_r\circ\psi(C^r_j)$, $j\ge1$ is a normal exhaustion of $X_r$ by $\mathcal O(X_r)$--convex compact sets.
The proof of Proposition 1 in \cite{thirdpaper:W07} ensures the following two crucial facts:
\begin{enumerate}[label=(\roman*)]
\item\label{thirdpaper:pc} $K_j^{r}$ are polynomially convex, and\;
\item\label{thirdpaper:joint} given any $K\subset\C^2\setminus S_r$ compact polynomially convex, the set $K\cup K_j^{r}$ is polynomially convex for any $j$ large enough\;.
\end{enumerate}
We construct now inductively a sequence of continuous mappings, whose continuous limit
$$
h\colon \bigcup_{r\in B} \left(\{r\}\times X_r\right)\rightarrow\mathbb C^2
$$ 
will be a fiberwise proper holomorphic embedding that we will compose with suitable mappings to prove the statement.

Proposition \ref{thirdpaper:prop} provides a continuous $g_1\colon B\times\Bbb C^2\to \Bbb C^2$ such that, for every $r\in B$
\begin{itemize}
\item $g_1(r,\cdot)\in\operatorname{Aut}\C^2$,\;and
\item $g_1(r,S_r)\subset\C^2\setminus1\overline{\B^2}$.
\end{itemize}

\medskip

Assume that we have constructed  $H_j\colon B\times\Bbb C^2\to B\times \Bbb C^2$, $H_j(r,\cdot)=(r,h_j(r,\cdot))$, continuous such that for every $r\in B$ we have that
\begin{itemize}
\item $h_j(r,\cdot)\in\operatorname{Aut}\C^2$,\;and
\item $h_j(r,S_r)\subset\C^2\setminus j\overline{\B^2}$\;.
\end{itemize}

It follows from \ref{thirdpaper:joint} that 
$$
L_j^{r}:=h_j(r,K_{m_j}^{r})\cup j\overline{\B^2}\subset\C^2\setminus h_j(r,S_r)\;.
$$
is polynomially convex for sufficiently large $m_j$. 
Then for every $\epsilon_j>0$, Proposition \ref{thirdpaper:prop} gives us $g_{j+1}\colon B\times\Bbb C^2\to \C^2$ continuous, such that for every $r\in B$ the following hold:
\begin{itemize}
\item $g_{j+1}(r,\cdot)\in\operatorname{Aut}\C^2$\;,
\item $\|g_{j+1}(r,\cdot)-\id\|_{L_j^{r}}<\epsilon_j$,\;and
\item  $g_{j+1}(r,h_j(r,S_r))\subset\C^2\setminus(j+1)\overline{\B^2}$\;.
\end{itemize}
Define $G_{j+1}(r,\cdot):=(r,g_{j+1}(r,\cdot))$ and consequently $H_{j+1}:=G_{j+1}\circ H_j$. 
Then letting $\epsilon_j\to 0$ and $m_j\to+\infty$ fast enough, the push--out method (see \cite{thirdpaper:F17}) allows to conclude that, for every $r\in B$, the sequence $\{h_j(r,\cdot)\}_j$ converges uniformly on compact subsets of $D_r:=\bigcup_{j\ge1}h_j^{-1}(r,L_j^r)$ to a biholomorphism $h_r\colon D_r\to\Bbb C^2$. It is straightforward to check that $X_r\subset D_r\subset\Bbb C^2\setminus S_r$, from which it follows that $S_r=\partial  X_r\subseteq\partial D_r$, thus $h_r\colon X_r\to\Bbb C^2$ is a proper holomorphic embedding for every $r\in B$. So setting $H(r,\cdot):=h_r(\cdot),\;\Psi(r,\cdot):=(r,\psi(\cdot))$, and $\Phi(r,\cdot):=(r,\phi_r(\cdot))$, the mapping $\Xi:=H\circ\Phi\circ\Psi\colon\Omega\to\Bbb C^2$ proves the claim of the Theorem. 

\end{proof}

\newpage
\bibliographystyle{amsplain}

\end{document}